\documentclass[11pt]{amsart}
\usepackage{graphicx}
\usepackage{color, fullpage, amsmath, amsthm, amssymb,verbatim,enumerate}

\newtheorem{theorem}{Theorem}[section]

\newtheorem{lemma}[theorem]{Lemma}

\newtheorem{proposition}[theorem]{Proposition}
\newtheorem{definition}[theorem]{Definition}
\numberwithin{equation}{section}

\theoremstyle{definition}

\theoremstyle{definition}

\theoremstyle{definition}

\newcommand{\Z}{\mathbb{Z}}
\newcommand{\R}{\mathbb{R}}
\newcommand{\N}{\mathbb{N}}
\newcommand{\C}{\mathbb{C}}

\renewcommand{\S}{\mathbb{S}}

\begin{document}

\date{\today}
\title{Line, Spiral, Dense in Space}

\author{Atakora Agoro}
\email{aagoro1@niu.edu}
\address{Department of Mathematical Sciences, Northern Illinois University, DeKalb, IL 60115-2888, USA}

\author{Alastair N. Fletcher}
\email{afletcher@niu.edu}
\address{Department of Mathematical Sciences, Northern Illinois University, DeKalb, IL 60115-2888, USA}

\begin{abstract}
Dobbs proved that the second iterate of almost every line in the complex plane under the exponential function is dense in the plane. In this paper, we prove an analogous result for the second iterate of the Zorich map in $\R^3$.
\end{abstract}

\maketitle

\section{Introduction}
\label{s:intro}

It is a standard homework exercise in complex analysis to show that the image of an oblique line under $\exp$ is a spiral. It is much less standard to show that the image of almost all oblique lines under $\exp \circ \exp$ is a dense curve in $\C$. This remarkable result is due to Dobbs \cite[Theorem 1]{Dob15}. Dobbs' method was generalized by Vaskouski, Prochorov, and Sheshko \cite{VPS19} to cover a larger class of periodic functions in $\C$.

The aim of this paper is to prove an analogous result for a quasiregular version of the exponential function in $\R^3$. Such maps are called Zorich maps, as they were first constructed by Zorich \cite{Zor67} and the dynamics of these maps have been well studied in the literature, see for example the work of Bergweiler \cite{Ber10} and Tsantaris \cite{Tsa23}.

We will fix a particular Zorich map $\mathcal{Z}:\R^3 \to \R^3 \setminus \{ 0 \}$ to work with in Section \ref{s:prelims}.

\begin{theorem}
    \label{thm:1}
For almost every line $L$ in $\R^3$, the curve $(\mathcal{Z} \circ \mathcal{Z}) (L)$ is dense in $\R^3$.
\end{theorem}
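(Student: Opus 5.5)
We begin by recalling the structure of the Zorich map fixed in Section \ref{s:prelims}. On the beam $\{(x_1,x_2,x_3): |x_1|,|x_2|\le 1\}$ it has the form $\mathcal{Z}(x) = e^{x_3} h(x_1,x_2)$, where $h$ maps the square $[-1,1]^2$ bi-Lipschitzly onto the upper hemisphere. It is extended to all of $\R^3$ by reflections, so that $\mathcal{Z}$ is $4$-periodic in $x_1$ and $x_2$. We restrict attention to lines $L$ that are not parallel to any coordinate plane and not parallel to the $x_3$-axis; together with any further exceptional sets of direction and base point identified below, these form a null set. Parametrize $L$ as $t\mapsto p+tv$ with $v=(v_1,v_2,v_3)$ and $v_1v_2v_3\neq 0$.

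The first step is to describe $\gamma=\mathcal{Z}(L)$ as a ``spherical spiral''. Along $L$ the radius $e^{p_3+tv_3}$ grows exponentially, while the angular part $h(p_1+tv_1,p_2+tv_2)$ (extended by reflection) traces a periodic-tiling image of a line on the torus $(\R/4\Z)^2$ folded onto $\S^2$. Writing $x=\gamma(t)$, we have $\gamma(t) = r(t)\,\theta(t)$ with $r(t)=e^{p_3+tv_3}$, where $\theta(t)$ follows a linear flow on the torus with slope $v_2/v_1$. For almost every direction this slope is irrational, so $\theta(t)$ equidistributes in the folded torus. Quantitatively, the line $\{(p_1+tv_1,p_2+tv_2)\}$ becomes $\delta$-dense on the torus within a bounded parameter window $T(\delta)$. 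During such a window the radius changes only by the factor $e^{v_3T(\delta)}$.

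The second step applies $\mathcal{Z}$ again. To show $\mathcal{Z}(\gamma)$ is dense, fix a target $y\neq0$ (the point $0$ is omitted, but density only needs a dense set of targets) and $\epsilon>0$. Choose a preimage $w\in\R^3$ of $y$ under $\mathcal{Z}$. By periodicity, the points $w+(4m,4n,0)$ for $m,n\in\Z$ are also preimages, and by local injectivity away from the branch set (the edges of the beams), a small ball $B(w+(4m,4n,0),\eta)$ maps into $B(y,\epsilon)$, with $\eta$ independent of $m,n$ by periodicity. It therefore suffices to show that $\gamma$ meets $B(w+(4m,4n,0),\eta)$ for some $m,n$, i.e. that $\gamma$ comes within $\eta$ of the lattice $\Lambda_w=\{w+(4m,4n,0)\}$. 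Near a large radius $R$, the part of $\gamma$ with $r(t)\approx R$ is a curve on the sphere of radius $R$ that, by the first step, is roughly $(\delta R)$-dense, where $\delta$ is fixed by the window. That is not fine enough on its own. Instead we use the spiral structure: as $t$ increases by one period of $\theta$, the curve returns near the same angular position with radius multiplied by $e^{4v_3/v_1}$. We therefore consider the return points $\gamma(t_0+kP)$, which lie on a ray-like set. More generally, we look at the sequence of intersections of $\gamma$ with a fixed plane $\{x_3=w_3\}$, or with a thin slab around it. The claim is that, for almost every $L$, the $(x_1,x_2)$-coordinates of these intersection points taken modulo $4$ are dense in $(\R/4\Z)^2$. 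Granting this, $\gamma$ approaches $\Lambda_w$ within $\eta$, and we are done.

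The main obstacle is exactly this equidistribution claim modulo $4$ for the slab crossings. The crossing points have the form $e^{p_3+t_kv_3}h(\cdot)$, with $t_k$ roughly an arithmetic-type sequence coming from the angular flow. Their planar coordinates behave like $c\,\lambda^k$ times smooth angular factors, where $\lambda=e^{4v_3/v_1}$ or a similar quantity. Density of $\{c\lambda^k \bmod 4\}$ for almost every $c$ (or almost every $\lambda$) is a Koksma/Weyl-type metric theorem, and this is the analogue of Dobbs' key step. I would follow Dobbs' approach. First, approximate the crossing points by an explicit family depending smoothly on the parameters $(p,v)$. Second, verify a nondegeneracy (derivative growth) condition in a chosen parameter, say $p_3$, which scales all crossing points by $e^{p_3}$. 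Third, apply Koksma's metric theorem in that parameter to obtain density modulo $4$ for almost every value. The delicate part is controlling the non-conformal distortion of $h$, so that the derivative condition holds uniformly; this is where the bi-Lipschitz bounds for $h$ from Section \ref{s:prelims} would be used. Fubini then upgrades ``almost every $p_3$ for each fixed remaining parameters'' to almost every line.
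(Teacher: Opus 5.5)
\textbf{There is a genuine gap: the central metric step is not proved.}

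Your reduction is fine as far as it goes. For large $t$ the spiral $\gamma=\mathcal{Z}(L)$ crosses each plane $\{x_3=w_3\}$ transversally, and very fast, once near each time $(p_1+tv_1,p_2+tv_2)$ crosses a grid line. So density of these crossing points modulo the period lattice would suffice. But that density statement is essentially the whole theorem. You do not prove it; you defer it to ``a Koksma/Weyl-type metric theorem'' and ``Dobbs' approach''.

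Moreover, the tool you name does not address it. Up to small errors, the crossing points are planar vectors $c\lambda^k u_k$. Here $u_k$ is a unit vector whose angle is governed by the $x_2$-coordinate at the $k$-th crossing. That coordinate advances by the fixed amount $\pi|v_2/v_1|$ (paper's normalization) between successive crossings of the lines $x_1=\pi/2+k\pi$. So for irrational $v_2/v_1$ the curve does \emph{not} return to the same angular position after a period in $x_1$, and the return points do not lie on a ray. Density of the scalars $c\lambda^k$ modulo the period therefore says nothing about density of $c\lambda^k u_k$ in the $2$-torus.

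Via the two-dimensional Weyl criterion you would need, for every $h\in\Z^2\setminus\{0\}$, equidistribution in $p_3$ of $e^{p_3}C\lambda^k\langle h,u_k\rangle$. There are also $p_3$-dependent corrections, since in the unscaled picture the crossing height is $w_3e^{-p_3}$. Koksma's derivative-separation hypothesis is not automatic here, because $\langle h,u_k\rangle$ gets arbitrarily close to $0$ along the rotation. Geometrically, varying $p_3$ alone moves each crossing point along an approximately radial segment. A long segment on the torus meets a given disc in definite proportion only if its direction is not close to a rational slope with small denominator relative to its length. Some additional selection or Borel--Cantelli argument over the direction is needed, and the proposal has none.

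\textbf{How the paper avoids this.} The paper uses a two-parameter family and pulls back rather than pushing forward. It fixes $P$ and parametrizes lines through $P$ by the surface $Y$. The set $f^{-1}(R)$ is the union of the cones $S_g$, whose faces $F_l$ hug the beam walls at large height. A definite proportion of a small square $E_\delta\subset Y$ projects onto these faces with distortion at most $2$. Then $\mathcal{Z}$ maps $F_l(t_1,t_2)$ onto a two-dimensional region of $H$ containing a trapezoid of width at least $2\pi$. That region therefore contains whole period cells, hence a definite proportion of the periodic array of discs $\widetilde V=\mathcal{Z}^{-1}(U\cap R)$. Bounded distortion carries this proportion back to $E_\delta$, so every point of $Y$ is an $\epsilon$-density point of $X_U$, and the Lebesgue density theorem concludes.

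The two-dimensionality of the parameter patch is exactly what removes the direction/Diophantine issue your one-parameter (in $p_3$) scheme runs into. To rescue your approach you would need one of two things. Either vary a second parameter, so the crossing points sweep a two-dimensional patch covering a full period cell. Or supply the missing segment-equidistribution argument.
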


The overarching strategy for proving Theorem \ref{thm:1} is the same as Dobbs, but there are more technical difficulties to overcome in $\R^3$. Given a ball $U$, we will study its second pre-image $(\mathcal{Z}\circ\mathcal{Z})^{-1}(U)$. Then instead of looking at a line to see whether it intersects this set, we will fatten it slightly, in a suitable sense, to give a cone and show that a certain proportion of this cone must intersect the second pre-image of the ball. We will then use measure theoretical considerations to obtain the result.

For the exponential map, the real axis is forward invariant, which means there are certainly some lines which must be excluded. For our Zorich map $\mathcal{Z}$, there are analogous lines that will need to be excluded, so we certainly cannot obtain Theorem \ref{thm:1} for every line in $\R^3$.

We leave open several related questions. First, Dobbs \cite[Theorem 3]{Dob15} showed that the image of every oblique line in $\C$ was dense in $\C$ under the third iterate of $\exp$. It seems plausible that the same might hold for $\mathcal{Z}$ in $\R^3$. Moreover, there is a whole family of Zorich maps that one could study in the context of generalizing Theorem \ref{thm:1} and, more generally, the same result could plausibly hold for any strongly automorphic map in $\R^3$ or higher dimensions. We, however, will satisfy ourselves with just proving Theorem \ref{thm:1} here.

The structure of the paper is as follows. In Section \ref{s:prelims} we will define the particular Zorich map we will use and recall various properties of it. In Section \ref{s:dense}, we will prove Theorem \ref{thm:1}.

This paper arises from the Ph.D. dissertation of the first named author. The external examiner for the dissertation was Neil Dobbs, and both authors would like to thank him for many invaluable comments and suggestions.

\section{Preliminaries}
\label{s:prelims}

\subsection{A Zorich map}
\label{ss:zorich}

Let $B$ denote the square 
\begin{equation}
\label{eq:B}    
B = \left \{ (x_1,x_2)\in \R^2 : |x_1|,|x_2| \leq \frac{\pi}{2} \right \} ,
\end{equation}
and let $\S^2_+$ denote the upper hemisphere in $\R^3$ given by
\[ \S^2_+ = \left \{ (x_1,x_2,x_3) \in \R^3 : x_1^2+x_2^2+x_3^2=1, x_3\geq 0 \right \}.\]
To build the Zorich map we will use, we start by defining the bi-Lipschitz map $h:B \to \S^2_+$ given by
\begin{equation}
    \label{eq:h}
h(x_1,x_2) = \left ( \frac{x_1 \sin M(x_1,x_2)}{\sqrt{x_1^2+x_2^2}} , \frac{x_2 \sin M(x_1,x_2)}{\sqrt{x_1^2+x_2^2}} , \cos M(x_1,x_2) \right ),
\end{equation} 
where $M(x_1,x_2) = \max \{ |x_1|,|x_2| \}$. By reflecting in the sides of squares in the domain and in the equator of $\S^2$ in the range, we obtain a doubly periodic map that, by an abuse of notation, we still call $h$ from $\R^2$ onto $\S^2$.

The Zorich map $\mathcal{Z}$ is then defined via
\begin{equation} 
\label{eq:Z}
\mathcal{Z} (x_1,x_2,x_3) = e^{x_3} h(x_1,x_2).
\end{equation}

If we let $B_0$ be the closed beam in $\R^3$ defined by $B_0 = B \times \R$, then it is clear that $\mathcal{Z}$ maps $B_0$ onto the closed upper half-space in $\R^3$ with the origin removed. Moreover, if we set 
\[ B_1 = \left [ \frac{\pi}{2} , \frac{3\pi}{2} \right ] \times \left [ -\frac{\pi}{2}, \frac{\pi}{2} \right ] \times \R \]
to be one of the beams neighbouring $B_0$, then the image of $B_1$ under $\mathcal{Z}$ is the closed lower half-space in $\R^3$ with the origin removed.

This produces a quasiregular map $\mathcal{Z} :\R^3 \to \R^3 \setminus \{ 0 \}$. At this stage, we point out that we do not really need that $\mathcal{Z}$ is quasiregular. It is only important for us that $\mathcal{Z}$ is an analogue of $\exp$ in $\R^3$. In fact, restricting to either the $\{x_1 = 0\}$ plane of the $\{x_2 = 0\}$ plane yields embedded copies of $\exp$. We refer to, for example, \cite[Appendix A2]{FlePra21} for a proof of the fact that $h$ defined in \eqref{eq:h} is bi-Lipschitz and $\mathcal{Z}$ is quasiregular. 

The branch set $\mathcal{B}(\mathcal{Z})$ is the set of points where $\mathcal{Z}$ fails to be a local homeomorphism. From the construction, we see that the branch set consists of the lines at the edges of faces of the beams, that is
\[ \mathcal{B}(\mathcal{Z}) = \bigcup_{ j,k \in \Z} \left ( \left \{ \frac{\pi}{2} + j\pi  \right \} \times \left \{ \frac{\pi}{2} + k\pi \right \} \times \R \right ) .\]

\subsection{The group of automorphisms}

It turns out that Zorich maps are strongly automorphic with respect to a discrete group of isometries. This means that there is such a group $G$ with 
\begin{enumerate}[(a)]
\item $\mathcal{Z}(g(x)) = \mathcal{Z}(x)$ for any $x\in \R^3$ and any $g\in G$, and
\item if $\mathcal{Z}(x) = \mathcal{Z}(y)$, then there exists $g\in G$ with $y=g(x)$.
\end{enumerate}

It is clear from the construction that $\mathcal{Z}$ is periodic with periods $(2\pi,0,0)$ and $(0,2\pi,0)$. However, $G$ also contains rotations by $\pi$ about lines in the branch set. If we set $g_1(x) = x + (2\pi,0,0)$, $g_2(x) = x + (0,2\pi , 0)$ and 
\[ g_3(x_1,x_2,x_3) = (\pi-x_1,\pi - x_2 , x_3), \]
then $G$ is generated by $g_1,g_2$, and $g_3$.

As $\mathcal{Z}$ maps $\operatorname{int}(B_0)$ bijectively onto the open upper half-space, and maps $\operatorname{int}(B_1)$ bijectively onto the open lower half-space, it follows that $\operatorname{int}(B_0 \cup B_1)$ together with some of its boundary forms a fundamental domain for the action of $G$ that we call $\Omega$. We will see later that this {\it domino} behaviour where a fundamental domain is naturally split into two regions has large consequences for our analysis.

\subsection{Mapping properties of our Zorich map}

Here we record some important mapping properties of $\mathcal{Z}$ that we will need later.

\begin{lemma}
    \label{lem:1}
For $t\in \R$, let $H_t$ be the plane 
\[ H_t = \{ (x_1,x_2,t) \in \R^3 : x_1,x_2\in \R \} .\]
Then $\mathcal{Z}$ maps $H_t$ onto the sphere $S(0,e^t) = \{ x\in \R^3 : |x| = e^t \}$. Conversely, $\mathcal{Z}^{-1}(S(0,r)) = H_{\ln r}$ for any $r>0$.
\end{lemma}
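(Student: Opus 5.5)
The plan is to exploit the product structure of \eqref{eq:Z}. The key fact is that $|\mathcal{Z}(x_1,x_2,x_3)| = e^{x_3}|h(x_1,x_2)|$, and the extended map $h$ takes values in $\S^2$, so $|h|\equiv 1$. Hence every point of $H_t$ is sent to a point of modulus $e^t$. This gives $\mathcal{Z}(H_t)\subset S(0,e^t)$. Since $x_3$ is the only variable controlling the modulus, the converse inclusion $\mathcal{Z}^{-1}(S(0,r))\subset H_{\ln r}$ is immediate too. If $|\mathcal{Z}(x)| = r$, then $e^{x_3}=r$, so $x_3=\ln r$ and $x\in H_{\ln r}$. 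Together with the forward inclusion applied to $t=\ln r$, this yields the set equality $\mathcal{Z}^{-1}(S(0,r)) = H_{\ln r}$.

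The remaining step is surjectivity of $\mathcal{Z}|_{H_t}$ onto $S(0,e^t)$, and this reduces to surjectivity of the extended $h:\R^2\to\S^2$. First, $h:B\to\S^2_+$ is onto, being the bi-Lipschitz map stated earlier. To check this directly, note that $M$ ranges over $[0,\pi/2]$ along rays from the origin, so $\cos M$ covers $[0,1]$. Also, the direction $(x_1,x_2)/\sqrt{x_1^2+x_2^2}$ covers every unit vector, and the square's boundary is sent onto the equator. Next, reflecting $B$ across the side $x_1=\pi/2$ gives the square $[\pi/2,3\pi/2]\times[-\pi/2,\pi/2]$. By construction, $h$ on this square is the composition of $h|_B$ with the reflection of $\S^2$ in the equator, so it maps onto the lower hemisphere $\S^2_-$. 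This is the same statement as the earlier observation that $\mathcal{Z}(B_1)$ is the closed lower half-space minus the origin. Hence $h(\R^2)\supset \S^2_+\cup\S^2_- = \S^2$.

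Given $y\in S(0,e^t)$, choose $(x_1,x_2)$ with $h(x_1,x_2)=y/e^t$. Then $\mathcal{Z}(x_1,x_2,t)=y$. The main (mild) obstacle is simply being careful that the reflected extension of $h$ is well defined on the common edges and still takes values in $\S^2$. This holds because $h$ maps $\partial B$ into the equator, which is fixed by the reflection. Once that is noted, everything else is a direct computation.
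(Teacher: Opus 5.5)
Your proof is correct and is exactly the direct verification from the construction that the paper omits: $|h|\equiv 1$ gives $|\mathcal{Z}(x)|=e^{x_3}$, which yields both inclusions at once, and surjectivity reduces to $h(\R^2)=\S^2$, which you correctly get from $h(B)=\S^2_+$ together with the reflected square covering the lower hemisphere. Two small wording points, neither affecting validity: bi-Lipschitz by itself does not imply ontoness, though your direct ray argument settles it, and on the reflected square $h$ is $\rho\circ h|_B\circ\sigma$, where $\sigma$ is the reflection in $x_1=\pi/2$ and $\rho$ the reflection in the equator.
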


This follows directly from the construction of $\mathcal{Z}$ and so we omit the proof.

\begin{lemma}
    \label{lem:2}
Let $t>0$ and let $S_t \subset B_0$ be defined via
\[ S_t = \{(x_1,x_2,x_3) \in B_0 : e^{x_3} \cos M(x_1,x_2) = t \}.\]
Then $\mathcal{Z}$ maps $S_t$ bijectively onto the plane $H_t$.
\end{lemma}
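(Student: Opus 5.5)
The plan is to read $S_t$ as the preimage, inside $B_0$, of the plane $H_t$, and then use the fact that $\mathcal{Z}$ is injective on $\operatorname{int}(B_0)$. From \eqref{eq:h} and \eqref{eq:Z}, the third coordinate of $\mathcal{Z}(x_1,x_2,x_3)$ is $e^{x_3}\cos M(x_1,x_2)$ whenever $(x_1,x_2)\in B$. So for $x\in B_0$ we have $\mathcal{Z}(x)\in H_t$ if and only if $e^{x_3}\cos M(x_1,x_2)=t$, that is, if and only if $x\in S_t$. This gives directly that $S_t=\mathcal{Z}^{-1}(H_t)\cap B_0$, and in particular $\mathcal{Z}(S_t)\subset H_t$.

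Next I will show that $S_t\subset \operatorname{int}(B_0)$. On $\partial B_0$ we have $|x_1|=\pi/2$ or $|x_2|=\pi/2$, so $M(x_1,x_2)=\pi/2$ and $\cos M=0$. Points of $\partial B_0$ therefore map into the plane $\{x_3=0\}$. Since $t>0$, no point of $\partial B_0$ lies in $S_t$. Here the hypothesis $t>0$ is used essentially.

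For injectivity, Section~\ref{ss:zorich} records that $\mathcal{Z}$ maps $\operatorname{int}(B_0)$ bijectively onto the open upper half-space. This follows because $h$ is a bijection from the interior of $B$ onto the open upper hemisphere, and $x_3\mapsto e^{x_3}$ is a bijection from $\R$ onto $(0,\infty)$. Hence $\mathcal{Z}$ restricted to $S_t\subset\operatorname{int}(B_0)$ is injective.

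For surjectivity, take $y\in H_t$. Since $t>0$, $y$ lies in the open upper half-space, so $y=\mathcal{Z}(x)$ for some $x\in\operatorname{int}(B_0)$. By the first step, $x\in S_t$.

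There is no real obstacle. The only point needing care is excluding the boundary of the beam, where injectivity of $\mathcal{Z}$ on $B_0$ could fail because of the reflections and the branch set. That exclusion is handled by $\cos M=0$ on $\partial B_0$ together with $t>0$.
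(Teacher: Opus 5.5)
Your proposal is correct and follows the same route as the paper: you identify $S_t$ with $\mathcal{Z}^{-1}(H_t)\cap B_0$ by reading off the third coordinate $e^{x_3}\cos M(x_1,x_2)$, then use injectivity of $\mathcal{Z}$ on the beam and the fact that $\mathcal{Z}(B_0)\supset H_t$, and you spell out the surjectivity step more explicitly than the paper does. Your extra step placing $S_t$ inside $\operatorname{int}(B_0)$ is valid but not needed, because $h$ is bi-Lipschitz, hence injective, on the closed square $B$ and $|\mathcal{Z}(x)|=e^{x_3}$, so $\mathcal{Z}$ is already injective on the closed beam $B_0$ and the worry about the branch set does not arise there.
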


\begin{proof}
As $\mathcal{Z}$ is injective on $B_0$, it is enough to find the pre-image of $H_t$ under $\mathcal{Z}$ contained in $B_0$. By \eqref{eq:h} and \eqref{eq:Z}, this follows by solving $e^{x_3} \cos M(x_1,x_2) = t$. Re-writing this expression, we also obtain that
\[ S_t = \left\{(x_1,x_2,x_3) \in B_0 : x_3 = \ln \left ( \frac{t}{\cos M(x_1,x_2) } \right ) \right \}.\]
\end{proof}

For $t<0$, we define $S_t\subset B_1$ to be the reflection of $S_{-t}$ across the common face of $B_0$ and $B_1$.

\begin{lemma}
    \label{lem:3}
Let $t\in \R$. Then:
\begin{enumerate}
\item for $t>0$, $\mathcal{Z}^{-1}(H_t)$ is $\bigcup_{g\in G} g(S_t)$,
\item for $t<0$, $\mathcal{Z}^{-1}(H_t)$ is $\bigcup_{g\in G} g(S_t)$,
\item for $t=0$, $\mathcal{Z}^{-1}(H_t)$ is the union of all of the faces of $g(B_0)$ and $g(B_1)$ over all $g\in G$.    
\end{enumerate}
\end{lemma}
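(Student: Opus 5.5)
The plan is to combine the strong automorphy of $\mathcal{Z}$ with the local description of preimages in $B_0$ and $B_1$ from Lemma \ref{lem:2}. Since $\mathcal{Z}(g(x)) = \mathcal{Z}(x)$ for every $g\in G$, the set $\mathcal{Z}^{-1}(H_t)$ is $G$-invariant. Conversely, property (b) of strong automorphy together with the fundamental domain $\Omega$ (essentially $B_0\cup B_1$) shows that every point of $\mathcal{Z}^{-1}(H_t)$ is a $G$-image of a point of $(B_0\cup B_1)\cap \mathcal{Z}^{-1}(H_t)$. So in each case it suffices to prove
\[ \mathcal{Z}^{-1}(H_t)\cap (B_0\cup B_1) = S_t \]
for $t\neq 0$, and the corresponding face statement for $t=0$. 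Taking the union over $G$ then gives the full preimage.

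For (1), let $t>0$. Since $\mathcal{Z}(B_1)$ is the closed lower half-space minus the origin, every point of $\mathcal{Z}(B_1)$ has third coordinate $\le 0<t$. Hence $B_1$ contributes nothing, and $\mathcal{Z}^{-1}(H_t)\cap B_0 = S_t$ by Lemma \ref{lem:2}.

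For (2), let $t<0$. Now $B_0$ contributes nothing, since its image has third coordinate $\ge 0$. To handle $B_1$, I would check that the reflection $\rho(x_1,x_2,x_3) = (\pi - x_1, x_2, x_3)$ across the common face $\{x_1=\pi/2\}$ satisfies
\[ \mathcal{Z}\circ\rho = R\circ\mathcal{Z} \quad \text{on } B_0, \]
where $R$ is reflection in the equatorial plane $\{x_3=0\}$. This holds because $h$ was extended by reflecting in the sides of squares in the domain and in the equator in the range. Then $\mathcal{Z}^{-1}(H_t)\cap B_1 = \rho\bigl(\mathcal{Z}^{-1}(H_{-t})\cap B_0\bigr) = \rho(S_{-t}) = S_t$ by the definition of $S_t$ for $t<0$.

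For (3), $t=0$, I would compute directly. In $B_0$, the third coordinate of $\mathcal{Z}$ is $e^{x_3}\cos M(x_1,x_2)$, which vanishes exactly when $M(x_1,x_2)=\pi/2$, that is, on the four faces of $B_0$. By the reflection symmetry $\rho$, the same holds for $B_1$. Applying $G$ gives the faces of all $g(B_0)$ and $g(B_1)$; these are precisely all the planes $x_1 = \pi/2 + j\pi$ and $x_2 = \pi/2 + k\pi$.

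The main obstacle is the bookkeeping in the reduction to the fundamental domain and in the reflection identity. One must confirm that $G$-translates of $B_0$ and $B_1$ cover $\R^3$, which holds since $\Omega$ is a fundamental domain. One must also check that the extension of $h$ is exactly compatible with $\rho$ and $R$, including the sign conventions, so that $S_t$ for $t<0$ really is the correct preimage inside $B_1$. Boundary points lying on faces only occur when $t=0$, so for $t\neq 0$ no ambiguity arises there.
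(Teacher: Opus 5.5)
Your proposal is correct and is essentially the paper's argument. Strong automorphy reduces everything to $B_0\cup B_1$. Then Lemma \ref{lem:2} handles $t>0$, the reflection $\rho$ across the common face $\{x_1=\pi/2\}$ handles $t<0$, and $\cos M(x_1,x_2)=0$ exactly on $\partial B_0$ handles $t=0$. You simply make explicit what the paper compresses into ``by construction'': that $G$-images of $B_0\cup B_1$ cover $\R^3$, and that $\mathcal{Z}\circ\rho$ equals $\mathcal{Z}$ followed by reflection in $\{x_3=0\}$.
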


\begin{proof}
Part~(a) follows from Lemma \ref{lem:2} and using the strong automorphy of $\mathcal{Z}$ with respect to $G$. Part~(b) follows analogously, as if $t<0$, then by construction, $\mathcal{Z}$ maps $S_t$ bijectively onto $H_t$. Part~(c) follows immediately by the construction of $\mathcal{Z}$. 
\end{proof}

It is clear that $S_t$ is a cone with vertex $(0,0,\ln t)$ and that every cross-section of $S_t$ with a plane $\{x_3 = s\}$ for $s > \ln t$ is a square given by $M(x_1,x_2) = \cos^{-1}[te^{-s}]$. In particular, as $s\to \infty$, the cross-sections get closer and closer to the sides of the square beam that $S_t$ lies in. We make this observation precise with the following lemma.

\begin{lemma}
\label{lem:squarebeam}
Let $t> 0$. Given $\xi >0$, there exists $s_0 > 0$ such that if $x\in S_t$ with $x_3 > s_0$, then $\operatorname{dist}(x,\partial B_0) < \xi$.
\end{lemma}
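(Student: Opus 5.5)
The plan is to compute the distance to $\partial B_0$ explicitly using the cross-section description of $S_t$ given just before the statement. A point $x=(x_1,x_2,x_3)\in S_t$ satisfies $e^{x_3}\cos M(x_1,x_2)=t$, so
\[ M(x_1,x_2) = \cos^{-1}\left( t e^{-x_3}\right). \]
Since $B_0 = B\times\R$ and $B$ is the square $M\le \pi/2$, the boundary $\partial B_0$ consists of the four faces where $|x_1|=\pi/2$ or $|x_2|=\pi/2$. For a point of $B_0$, the distance to $\partial B_0$ is therefore
\[ \operatorname{dist}(x,\partial B_0) = \frac{\pi}{2} - M(x_1,x_2). \]
To justify this, if, say, $|x_1| = M(x_1,x_2)$, then moving horizontally in the $x_1$ direction reaches a face after distance $\pi/2-|x_1|$. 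No boundary point is closer, because every face lies at horizontal distance at least $\pi/2 - M$ from $x$.

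Combining these gives
\[ \operatorname{dist}(x,\partial B_0) = \frac{\pi}{2} - \cos^{-1}\left(te^{-x_3}\right) = \sin^{-1}\left(te^{-x_3}\right). \]
This is a decreasing function of $x_3$ that tends to $0$ as $x_3\to\infty$.

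Given $\xi>0$, we may assume $\xi<\pi/2$. Choose
\[ s_0 = \max\{1, \ln(t/\sin\xi)\}. \]
If $x_3>s_0$, then $te^{-x_3}<\sin\xi$, and monotonicity of $\sin^{-1}$ on $[0,1]$ gives $\operatorname{dist}(x,\partial B_0)<\xi$. The maximum with $1$ only ensures $s_0>0$, as the statement requires.

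I expect no real obstacle. The only point needing care is the elementary identity for the distance from a point of the square beam to its boundary. Even if one prefers not to prove equality, the inequality $\operatorname{dist}(x,\partial B_0)\le \pi/2-M(x_1,x_2)$ suffices, and it is immediate from exhibiting the nearby boundary point obtained by moving in the coordinate direction that attains the maximum.
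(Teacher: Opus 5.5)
Your proposal is correct and follows essentially the same route as the paper: write $\operatorname{dist}(x,\partial B_0)=\pi/2-M(x_1,x_2)$, use the defining equation of $S_t$ to get $\sin(\operatorname{dist}(x,\partial B_0)) = te^{-x_3}$, and let $x_3\to\infty$. Your explicit choice $s_0=\max\{1,\ln(t/\sin\xi)\}$ (with $\xi<\pi/2$) makes precise the paper's brief appeal to continuity of the sine, and it matches the threshold the paper later uses in the proof of Theorem~\ref{thm:2}.
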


By applying a translation or rotation, exactly the same result holds for $g(S_t)$ for any $t\neq 0$ and $g\in G$.

\begin{proof}
By the proof of Lemma \ref{lem:2}, $S_t$ is given by the equation
\[ x_3 = \ln \left ( \frac{ t}{\cos M(x_1,x_2) } \right ).\]
Suppose that $x = (x_1,x_2,x_3) \in S_t$ and set $y = \operatorname{dist}(x,\partial B_0)$. By construction, it follows that
\[ \cos (\pi/2 - y) = t e^{-x_3},\]
and thus
\[ \sin y = te^{-x_3}.\]
The lemma then follows from the continuity of the sine function.
\end{proof}

\subsection{A notion of distortion}

A tool used by Dobbs \cite{Dob15}, as well as in \cite{VPS19}, is the notion of the distortion of a differentiable, non-constant function $f$ on a set $E\subset \C$ via
\[ D(f,E) = \frac{ \sup_{z\in E} |f'(z)| }{\inf_{z\in E} |f'(z)|} .\]

As our Zorich map is not differentiable everywhere, we will need to modify this quantity in order to utilize it. In fact, as quasiregular maps are in general just locally H\"older, we will need to verify $\mathcal{Z}$ has better regularity than this. We start with a definition that comes up when dealing with bounded length distortion maps, or BLD maps for short.

\begin{definition}
    \label{def:infbilip}
Let $E \subset \R^n$ be a domain. A function $f:E \to \R^n$ is called infinitesimally bi-Lipschitz if there exists a constant $\lambda \geq 1$ such that for all $x_0 \in E$, we have
\[ \frac{1}{\lambda} \leq \liminf_{x\to x_0} \frac{ |f(x) - f(x_0) |}{|x-x_0|} \leq \limsup_{x\to x_0} \frac{ |f(x) - f(x_0)| }{|x-x_0|} \leq \lambda .\]
\end{definition}

For example, the extended map $h:\R^2 \to \S^2$ certainly is not bi-Lipschitz as it is doubly periodic and thus not injective. However, it is infinitesimally bi-Lipschitz with the same constant $\lambda$. Again we refer to \cite[Appendix A2]{FlePra21} for this fact.

Next, if $E\subset \R^n$ is a domain and $f:E \to \R^n$ is a non-constant continuous map, for $x\in E$ and $0<r<\operatorname{dist}(x,\partial E)$ we recall the linear dilatations
\[ L(x,f,r) = \sup_{|y-x| = r} |f(y) - f(x)|, \quad \ell(x,f,r) = \inf_{|y-x|=r} |f(y)-f(x)|.\]

\begin{definition}
    \label{def:df}
With the notation above, the upper and lower pointwise Lipschitz constants of $f$ at $x\in E$ are, respectively,
\[ \widetilde{L}_f (x) = \limsup_{r\to 0} \frac{ L(x,f,r)}{r}, \quad \widetilde{\ell}_f(x) = \liminf_{r\to 0} \frac{ \ell(x,f,r)}{r} .\]
The {\it relative distortion} of $f$ in $E$ is then defined by
\[ D(f,E) = \frac{ \sup_{x\in E} \widetilde{L}_f(x) }{ \inf_{x\in E} \widetilde{\ell}_f(x) },\]
whenever both quantities are finite and the denominator is strictly positive.
\end{definition}

Issues arise for $D(f,E)$ when $f$ is only a H\"older map. Consider for example the radial map $f(x) = x|x|^{a-1}$, for $0<a<1$. However, it is clear that if $f$ is infinitesimally bi-Lipschitz with constant $\lambda >1$, then $D(f,E) \leq \lambda^2$.

Recall that $h:\R^2 \to \S^2$ is infinitesimally bi-Lipschitz with constant, say, $\lambda(h)$. We will want to control the relative distortion of our Zorich map $\mathcal{Z}$ in certain regions, as given in the following lemma.

\begin{lemma}
\label{lem:4}
Let $t_1<t_2$ and let $E$ be the slab $\R^2 \times (t_1,t_2) \subset \R^3$. Then 
\[ D(\mathcal{Z} , E) \leq \lambda(h)^2 e^{t_2-t_1}.\]
\end{lemma}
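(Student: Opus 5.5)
To prove Lemma \ref{lem:4}, we will show that at every point $x\in E$ the upper and lower pointwise Lipschitz constants of $\mathcal{Z}$ are comparable to $e^{x_3}$, with comparison constant $\lambda(h)$:
\[ \frac{e^{x_3}}{\lambda(h)} \leq \widetilde{\ell}_{\mathcal{Z}}(x) \leq \widetilde{L}_{\mathcal{Z}}(x) \leq \lambda(h) e^{x_3}. \]
Once these bounds hold, we take the supremum of the upper bound over $E$, which is at most $\lambda(h)e^{t_2}$, and the infimum of the lower bound, which is at least $e^{t_1}/\lambda(h)$. Dividing gives $D(\mathcal{Z},E)\leq \lambda(h)^2e^{t_2-t_1}$ directly from Definition \ref{def:df}.

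The pointwise bounds come from splitting the displacement into a ``horizontal'' part and a ``vertical'' part. Fix $x=(x',x_3)$ with $x'=(x_1,x_2)$, and let $y=(y',y_3)$ with $|y-x|=r$. Since $h$ takes values in the unit sphere, we have the exact identity
\[ |\mathcal{Z}(y)-\mathcal{Z}(x)|^2 = (e^{y_3}-e^{x_3})^2 + e^{x_3+y_3}|h(y')-h(x')|^2. \]
To verify it, expand $|e^{y_3}h(y')-e^{x_3}h(x')|^2 = e^{2y_3}+e^{2x_3}-2e^{x_3+y_3}\langle h(y'),h(x')\rangle$, and use $|h(y')-h(x')|^2 = 2-2\langle h(y'),h(x')\rangle$.

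As $r\to 0$ we have $y_3\to x_3$, so $(e^{y_3}-e^{x_3})^2 = e^{2x_3}|y_3-x_3|^2(1+o(1))$ and $e^{x_3+y_3}=e^{2x_3}(1+o(1))$. Since $h$ is infinitesimally bi-Lipschitz with constant $\lambda(h)$, for $y'$ close to $x'$ we have
\[ (\lambda(h)^{-1}-\epsilon)|y'-x'| \leq |h(y')-h(x')| \leq (\lambda(h)+\epsilon)|y'-x'|. \]
This also holds trivially when $y'=x'$. Inserting these estimates, using $\lambda(h)\geq 1$, and using $|y'-x'|^2+|y_3-x_3|^2=r^2$, we get
\[ (\lambda(h)^{-1}-\epsilon)^2 e^{2x_3} r^2(1+o(1)) \leq |\mathcal{Z}(y)-\mathcal{Z}(x)|^2 \leq (\lambda(h)+\epsilon)^2 e^{2x_3} r^2(1+o(1)), \]
uniformly over the sphere $|y-x|=r$. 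Taking the sup and inf over that sphere, dividing by $r$, and letting $r\to0$ and then $\epsilon\to 0$ yields the pointwise bounds.

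The main obstacle is the uniformity of the infinitesimal bi-Lipschitz estimate for $h$. Definition \ref{def:infbilip} gives a liminf/limsup at each fixed $x'$, and that is exactly what is needed here, since $x$ is fixed and only $y$ varies. So we should apply the definition of $h$ at the fixed point $x'$ only, not claim any uniform local bi-Lipschitz property. Points on the branch set need no special treatment, because the extended $h$ is infinitesimally bi-Lipschitz on all of $\R^2$, as cited from \cite{FlePra21}.
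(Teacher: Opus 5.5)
Your proof is correct and follows the paper's route: bound $\widetilde{\ell}_{\mathcal{Z}}$ below by $e^{t_1}/\lambda(h)$ and $\widetilde{L}_{\mathcal{Z}}$ above by $\lambda(h)e^{t_2}$ at each point of the slab, then divide. The paper only asserts that these pointwise bounds follow from $\mathcal{Z}(x)=e^{x_3}h(x_1,x_2)$, and your identity $|\mathcal{Z}(y)-\mathcal{Z}(x)|^2=(e^{y_3}-e^{x_3})^2+e^{x_3+y_3}|h(y')-h(x')|^2$, applied at each fixed point, supplies the verification it omits.
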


\begin{proof}
It follows directly from \eqref{eq:Z} that if $x\in E$, then
\[ \frac{e^{t_1}}{\lambda} \leq \widetilde{\ell}_{\mathcal{Z}} ( x_1,x_2,x_3) \leq \widetilde{L}_{\mathcal{Z}}(x_1,x_2,x_3) \leq \lambda e^{t_2}.\]
The lemma follows.
\end{proof}

We will also need to record how relative areas change under a map with controlled relative distortion.

\begin{proposition}
    \label{prop:reldist}
Suppose that $E\subset \R^n$ is a Lebesgue measurable set with $m(E) > 0$ and $U \subset E$ is a measurable subset. If $f:E \to \R^n$ is a homeomorphism that satisfies $D(f,E) \leq \lambda$, then 
\[ \frac{1}{\lambda^n} \cdot \frac{m(U)}{m(E)} \leq \frac{ m(f(U))}{m(f(E))} \leq \lambda^n \cdot \frac{ m(U)}{m(E)}.  \]
\end{proposition}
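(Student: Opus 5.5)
The plan is to compare the Jacobian-type volume distortion of $f$ pointwise with the upper and lower pointwise Lipschitz constants, and then integrate. Set $\Lambda = \sup_{x\in E}\widetilde{L}_f(x)$ and $\ell_0 = \inf_{x\in E}\widetilde{\ell}_f(x)$, so that $\Lambda/\ell_0 = D(f,E)\le\lambda$. The key claim is a two-sided volume estimate valid for every measurable $V\subset E$:
\[ \ell_0^n\, m(V) \le m(f(V)) \le \Lambda^n\, m(V). \]
Granting this, apply it to $V=U$ and $V=E$. The upper bound for $U$ combined with the lower bound for $E$ gives
\[ \frac{m(f(U))}{m(f(E))} \le \frac{\Lambda^n m(U)}{\ell_0^n m(E)} \le \lambda^n \frac{m(U)}{m(E)}. \]
Symmetrically, the lower bound for $U$ with the upper bound for $E$ gives the left inequality. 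Note $m(f(E))>0$ follows from the lower bound since $m(E)>0$ and $\ell_0>0$. If $m(E)=\infty$, I would first prove the statement for $E\cap B(0,R)$ and pass to the limit, or interpret the ratios appropriately; the intended use is with bounded sets.

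For the upper bound, I would use a Vitali/Besicovitch-type covering argument, which is the standard proof that a pointwise Lipschitz bound controls the measure of images. Fix $\epsilon>0$ and an open set $O\supset V$ with $m(O)\le m(V)+\epsilon$. For each $x\in V$ there are arbitrarily small $r$ with $B(x,r)\subset O$ and $f(\overline{B}(x,r)\cap E)\subset \overline{B}(f(x),(\Lambda+\epsilon)r)$. Here I am using that $L(x,f,r)/r\le\Lambda+\epsilon$ for all small $r$, together with continuity of the homeomorphism, so that the image of the ball lies within the sup over spheres. The subtlety that $E$ need not be open is handled by working at interior or density points, or by assuming $E$ is open as in the intended application with slabs. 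Vitali's covering theorem for Lebesgue measure then gives disjoint balls covering $V$ up to a null set. Since $f$ is locally Lipschitz there, $f$ maps null sets to null sets (Lusin (N)). Summing gives $m(f(V))\le(\Lambda+\epsilon)^n(m(V)+\epsilon)$, and we let $\epsilon\to0$.

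For the lower bound, I would apply the same argument to $f^{-1}$ on $f(E)$. Because $f$ is a homeomorphism with $\ell(x,f,r)\ge(\ell_0-\epsilon)r$ for small $r$, the image of $B(x,r)$ contains $B(f(x),(\ell_0-\epsilon)r)$; this follows by topological degree/invariance of domain, since $f$ of the sphere separates $f(x)$ from the complement. Hence $\widetilde{L}_{f^{-1}}\le 1/\ell_0$ on $f(E)$, and the upper bound for $f^{-1}$ gives $m(V)\le \ell_0^{-n} m(f(V))$.

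The main obstacle I expect is this inverse step, namely justifying that the lower pointwise Lipschitz constant of $f$ controls the upper one of $f^{-1}$ uniformly. This requires the image of a small ball to contain a ball of proportional radius, which uses the homeomorphism hypothesis essentially. A secondary technical point is measurability of $f(U)$ and the null-set behaviour under $f^{-1}$. Both are handled by the same local Lipschitz bounds, since a homeomorphism maps Borel sets to Borel sets, and locally Lipschitz maps preserve null sets in both directions.
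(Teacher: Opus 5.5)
Your proof is correct, and it reaches the same two-sided estimate $\ell_0^n\, m(V)\le m(f(V))\le \Lambda^n\, m(V)$ that the paper's argument rests on, but by a different mechanism. The paper's route is analytic. It asserts that the Jacobian $J_f$ exists almost everywhere, and notes that at points of differentiability $\widetilde{\ell}_f(x)^n\le J_f(x)\le \widetilde{L}_f(x)^n$, since the singular values of the derivative lie between the two pointwise Lipschitz constants. It then integrates via the change-of-variables formula $m(f(V))=\int_V J_f$. You never differentiate. Your upper bound comes from a Vitali covering of $V$ by small balls whose images lie in balls of radius $(\Lambda+\epsilon)r$. Your lower bound runs the same argument for $f^{-1}$, after using invariance of domain to show $f(B(x,r))\supset B(f(x),(\ell_0-\epsilon)r)$ and hence $\widetilde{L}_{f^{-1}}\le 1/\ell_0$. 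The paper's route is shorter, but it leans on unstated machinery: a.e.\ differentiability via Rademacher--Stepanov, the Lusin (N) property, and the area formula for injective maps. Yours is elementary and self-contained, and it shows exactly where the homeomorphism hypothesis is used; in the paper that use is buried inside the change-of-variables formula. Two small remarks. First, the local Lipschitz property you cite to get (N) deserves one line of justification: a continuous map with $\widetilde{L}_f\le\Lambda$ at every point of a convex open set is $\Lambda$-Lipschitz there, by a continuation argument along segments. Second, taking the supremum and infimum over all of $E$ is the right formulation. The paper's write-up bounds $m(f(E))$ below by $\inf_{x\in U}J_f(x)\,m(E)$, and that infimum must be taken over $E$ for the inequality to hold. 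Like the paper, you effectively need $E$ open for Definition~\ref{def:df} to make sense, so restricting to that case costs nothing.
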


\begin{proof}
By the hypotheses, the
Jacobian \(J_f\) exists almost everywhere on \(U\).
Moreover, we have, for any $x\in U$ where $J_f(x)$ exists,
\[ J_f(x) \leq \widetilde{L}_f (x) ^n, \quad J_f(x) \geq \widetilde{l}_f(x)^n.\]
It follows that if $f$ has relative distortion $D(f,U) = \alpha$, then
\[
\frac{\sup_{x\in U} J_f(x)}{\inf_{x\in U} J_f(x)} \le \frac{\sup_{x\in U} \widetilde{L}_f(x)^n}{\inf_{x\in U} \widetilde{l}_f(x)^n} \leq \alpha^n .
\]

Using the change-of-variables formula, we obtain
\[
m\bigl(f(E)\bigr)=\int_E J_f(x)\,dx
\;\ge\;
\inf_{x\in U} J_f(x)\, m(E),
\]
and
\[
m\bigl(f(U)\bigr)=\int_U J_f(x)\,dx
\;\le\;
\sup_{x\in U} J_f(x)\, m(U).
\]

Combining these estimates yields
\[
\frac{m\bigl(f(E)\bigr)}{m(f(U))}
\;\ge\;
\frac{\inf_{x\in U} J_f(x)}{\sup_{x\in U} J_f(x)}\,
\frac{m(E)}{m(U)}
\;\ge\;
\frac{1}{\alpha^n}\,
\frac{m(E)}{m(U)}.
\]
The other inequality follows analogously and so this completes the proof.
\end{proof}

We will apply a variation of this result when $U\subset E$ are measurable subsets of a surface in $\R^3$ - the conclusion holds with $n=2$.

\subsection{The Lebesgue Density Theorem}

A key ingredient to Dobbs' proof, and thus to ours, is the following measure theoretical result. Recall that if $E \subset \R^n$ is a Lebesgue measurable set and if $\epsilon >0$, then a point $x\in \R^n$ is called an $\epsilon$-density point of $E$ if 
\[ \lim_{r\to 0} \frac{ m(E \cap B(x,r))}{m(B(x,r))} \geq \epsilon , \]
if the limit exists. 

\begin{theorem}
    \label{thm:ldp}
Let $E\subset \R^n$ be a Lebesgue measurable set. Then almost every point of $E$ is a $1$-density point of $E$.
\end{theorem}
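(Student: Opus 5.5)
This is the Lebesgue density theorem, a standard measure-theoretic fact; the plan is to deduce it from the Lebesgue differentiation theorem, or prove it directly via a Vitali covering argument. By inner regularity and the fact that the statement is local, we may first reduce to the case where $E$ is bounded. Write $E_R = E\cap B(0,R)$. Density points of $E_R$ lying in the open ball $B(0,R)$ are density points of $E$, since for small $r$ the balls $B(x,r)$ stay inside $B(0,R)$. A countable union over $R\in\N$ then handles general $E$.

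For bounded $E$, the plan is to show that for each $\alpha<1$ the set
\[ A_\alpha = \left\{ x\in E : \liminf_{r\to 0} \frac{m(E\cap B(x,r))}{m(B(x,r))} < \alpha \right\} \]
has measure zero. Since the ratio is at most $1$, taking $\alpha = 1-1/k$ and a countable union shows that for almost every $x\in E$ the $\liminf$ equals $1$. Hence the limit exists and equals $1$, and $x$ is a $1$-density point.

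To prove $m(A_\alpha)=0$, argue by contradiction via outer measure. Fix $\eta>0$ and choose an open set $O\supset A_\alpha$ with $m(O)\le m^*(A_\alpha)+\eta$. Every $x\in A_\alpha$ has arbitrarily small radii $r$ with $B(x,r)\subset O$ and $m(E\cap B(x,r))<\alpha\, m(B(x,r))$. These balls form a Vitali cover of $A_\alpha$. The Vitali covering lemma gives disjoint balls $B_i$ from this family with $m^*(A_\alpha\setminus \bigcup B_i)=0$. Since $A_\alpha\subset E$, we get
\[ m^*(A_\alpha) \le \sum_i m(E\cap B_i) < \alpha \sum_i m(B_i) \le \alpha\, m(O) \le \alpha\,(m^*(A_\alpha)+\eta). \]
Letting $\eta\to0$ gives $(1-\alpha)m^*(A_\alpha)\le 0$, so $m^*(A_\alpha)=0$.

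The main obstacle is the Vitali covering lemma itself, the only nontrivial input. If it is not taken as known, I would prove it by the usual greedy selection: repeatedly pick disjoint balls of nearly maximal radius. Any leftover point then lies in the $5$-fold enlargements of the tail balls, whose total measure tends to zero because $\sum m(B_i)\le m(O)<\infty$. Alternatively, one could cite the Lebesgue differentiation theorem applied to $\chi_E$, which immediately gives that the limit equals $\chi_E(x)=1$ for almost every $x\in E$.
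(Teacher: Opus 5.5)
Your proof is correct, but it is not the paper's route. The paper gives no argument at all; it simply cites Evans--Gariepy, where the density theorem is obtained as a corollary of the Lebesgue--Besicovitch differentiation theorem for Radon measures applied to $\chi_E$. That is the alternative you mention in your last sentence, and in that book it rests on the Besicovitch covering theorem.

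Your main argument is instead a direct, self-contained proof. You localize to bounded $E$, which is exactly what guarantees $m^*(A_\alpha)<\infty$ and so makes the rearrangement of $m^*(A_\alpha)\le\alpha\,(m^*(A_\alpha)+\eta)$ legitimate. You then show each $A_\alpha$ is null using outer regularity and the Vitali covering lemma.

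Your route buys economy: only the $5r$-covering lemma for Lebesgue measure is needed, with no differentiation theory. The cited route buys generality (arbitrary Radon measures, Lebesgue points of $L^1_{\mathrm{loc}}$ functions), which the paper does not need.

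Two small points of hygiene:
\begin{enumerate}
\item The Vitali covering theorem is usually stated for closed balls. Choose radii with $\overline{B}(x,r)\subset O$, which is possible since $O$ is open, and use $m(\partial B(x,r))=0$ to see that the density ratios are unchanged.
\item Inner regularity plays no role in your reduction to bounded $E$; only the locality of the density ratio is used.
\end{enumerate}

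Your argument also goes through verbatim with centred cubes in place of balls. Cubes are the shape of the neighbourhoods $E_\delta$ actually used in the proof of Theorem~\ref{thm:2}.
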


We refer to, for example, \cite[p.45]{EvaGar92} for this result.

\section{Dense curves}
\label{s:dense}

\subsection{The set-up}

In this section we will prove Theorem \ref{thm:1}. With the Zorich map $\mathcal{Z}$ as defined in Section \ref{ss:zorich}, we set $f = \mathcal{Z} \circ \mathcal{Z}$. Next, instead of considering all lines in $\R^3$, we will consider all lines passing through an arbitrary point $P=(p_1,p_2,p_3) \in \R^3$. 

Now, as any line passing through $P$ in the $x_3 = p_3$ plane will be mapped by $\mathcal{Z}$ into a sphere, and thus $f$ will map this line into a bounded region, we will remove these lines from consideration without affecting the result. Moreover, by the symmetry of the Zorich map, it is possible that lines in either plane $x_1 = p_1$ or $x_2 = p_2$ will be mapped by $\mathcal{Z}$ into the same plane (c.f. the comment that $\mathcal{Z}$ restricted to the $x_1= 0$ or $x_2=0$ planes is an embedded copy of $\exp$). Therefore the image of such a line under $f$ may be contained in a plane and thus cannot be dense in $\R^3$. For technical reasons, we will also remove the planes given by $x_2 = \pm x_1$. Removing lines in these planes from consideration will not affect the result.

Every other line that passes through $P$ may be parametrized as passing through a unique point of 
\begin{equation}
    \label{eq:para}
Y = \{ P+(x_1,x_2,x_3) : M(x_1,x_2)=1, x_3>0, x_1x_2(x_1^2-x_2^2)\neq 0 \}. 
\end{equation}
Then $Y$ is part of the boundary of a semi-infinite square beam. For $\alpha \in Y$, we denote by $L_{\alpha}$ the unique line through $P$ and $\alpha$.

Now, let $(q_n)_{n=1}^{\infty}$ be a dense sequence in $\R^3$ such that $|q_n|$ is not $0$ or $1$ for all $n$. Then let $(\delta_n)_{n=1}^{\infty}$ be a decreasing sequence with $\delta_n \to 0$ and $0<\delta_n < |q_n|$ so that 
\[ \mathcal{U} = \{ B(q_n,\delta_n) : n\geq 1 \} \]
forms a countable base for the usual topology of $\R^3 \setminus (S^{2} \cup \{ 0 \})$. The fact we have excluded $0$ and the unit sphere here will not affect the result, as if a set is dense in $\R^3 \setminus (S^{2} \cup \{ 0 \})$, then it is dense in $\R^3$.

Observe that $f(L_{\alpha})$ is dense in $\R^3 \setminus (S^{2} \cup \{ 0 \})$ if and only if $f(L_{\alpha}) \cap U \neq \emptyset$ for all $U\in \mathcal{U}$. Let us therefore define $X_U$ to be
\[ X_U = \left \{ \alpha \in Y : f(L_{\alpha}) \cap U \neq \emptyset \right \}.\]
As each $U \in \mathcal{U}$ is open and $f$ is continuous, it follows that $X_U$ is also open, and hence Lebesgue measurable. If we set 
\[ X' = \bigcap_{U \in \mathcal{U}} X_U,\]
then if each $X_U$ has full measure in $\R^2$, so does $X'$, as a countable intersection of full measure sets. Now, given a set $X\subset \R^2$, if almost every point in $\R^2$ is an $\epsilon$-density point of $X$ for some $\epsilon >0$, then the set of $1$-density points for the complement of $X$ has zero measure, and hence by the Lebesgue Density Theorem, Theorem \ref{thm:ldp}, $X$ has full measure. Proving Theorem \ref{thm:1} then reduces to proving the following theorem.

\begin{theorem}
    \label{thm:2}
Let $U\subset \mathcal{U}$. Then there exists $\epsilon >0$ such that each $\alpha \in Y$ is an $\epsilon$-density point for $X_U$.
\end{theorem}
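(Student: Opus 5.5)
Fix $U=B(q,\delta)\in\mathcal{U}$ and $\alpha\in Y$. The plan is to follow Dobbs: thicken the line $L_\alpha$ into a thin cone and track how much of the cone's cross-sections are sent into $(\mathcal{Z}\circ\mathcal{Z})^{-1}(U)$. For small $r>0$, let $D_r=Y\cap B(\alpha,r)$, a small planar patch, and consider the cone $C_r=\bigcup_{\beta\in D_r}L_\beta$. Since $\alpha$ has $x_3$-component strictly positive relative to $P$ and is not in the excluded planes, $L_\alpha$ is oblique: its $x_3$-coordinate tends to $+\infty$, and its horizontal direction is not parallel to a coordinate axis or to a diagonal. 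It therefore crosses beam faces transversally, infinitely often, and in a non-degenerate pattern. The goal is a lower bound
\[ \frac{m(X_U\cap D_r)}{m(D_r)}\geq\epsilon \]
with $\epsilon$ depending only on $U$, not on $\alpha$ or $r$.

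First I would describe $V=\mathcal{Z}^{-1}(U)$. By Lemma~\ref{lem:1}, $U$ lies in the spherical shell $\{e^{a}<|x|<e^{b}\}$ with $a<b$, and since $|q|\neq 0$ the preimage $V$ is a $G$-periodic family of open sets inside the slab $\R^2\times(a,b)$. Each fundamental domain $g(\Omega)$ contains a copy of definite volume, located either in the $B_0$-half or the $B_1$-half according to the sign of the third coordinate of $q$; this is the domino effect. Next, $\mathcal{Z}^{-1}(V)$ is the union, over $g\in G$ and over the pieces of $V$, of the sets $\mathcal{Z}^{-1}$ restricted to $g(B_0)$ or $g(B_1)$. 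A height $x_3=s$ is sent by $\mathcal{Z}$ to the sphere of radius $e^s$ (Lemma~\ref{lem:1}). On the part of $L_\alpha$ at height $s$ inside a beam $g(B_0)$, $\mathcal{Z}$ maps onto the upper hemisphere of radius $e^s$, and the slab $\R^2\times(a,b)$ pulls back to the region between the surfaces $S_{e^a}$ and $S_{e^b}$ of Lemmas~\ref{lem:2}--\ref{lem:3}. By Lemma~\ref{lem:squarebeam}, for large $s$ these surfaces hug the beam faces within distance about $e^{b-s}$.

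Second, I would intersect the cone with a horizontal plane at a large height $s$. The cross-section $C_r\cap H_s$ is a small region $Q_s$, an affine image of $D_r$ scaled by roughly $s$, and it is affinely comparable to $D_r$ uniformly, so relative measures are preserved up to a constant. Choose $s$ with $\mathrm{diam}(Q_s)$ of order a large constant times the period $2\pi$, so that $Q_s$ contains many full periods of $h$ and the relative position of the cone's tip no longer matters. Each line $L_\beta$ meets $H_s$ at one point, but in a neighbourhood the line is nearly vertical relative to the thin layers, which have thickness about $e^{-s}$. So I would instead use a nearby height $s'$ where the layer structure is seen, and compare the lines with horizontal displacements. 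Concretely, $L_\beta$ hits $\mathcal{Z}^{-1}(V)$ if, near a face crossing, the point $\mathcal{Z}(L_\beta)$ passes through $V$. Near a face of $g(B_0)$ at large height, $\mathcal{Z}$ acts like $\exp$ in the normal direction times a rescaled $h$ tangentially. The image of the segment of $L_\beta$ near the face is then a curve sweeping the slab $\R^2\times(a,b)$ at a horizontal location that varies with $\beta$. Using Lemma~\ref{lem:4} on a slab of bounded thickness together with Proposition~\ref{prop:reldist} (in its $n=2$ surface version), the proportion of $\beta\in D_r$ whose image hits $V$ is comparable to the density of $V$ in the slab. This density is bounded below by $\mathrm{vol}(V\cap\Omega)/\mathrm{vol}(\Omega\cap\text{slab})$ times $\lambda(h)^{-4}e^{-2(b-a)}$.

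The main obstacle is this last step: making precise that, as $\beta$ ranges over $D_r$, the map $\beta\mapsto$ (horizontal position where $\mathcal{Z}(L_\beta)$ crosses the slab) is a homeomorphism onto a region covering many periods, with bounded relative distortion. The map $\mathcal{Z}$ is not smooth across faces, and the domino splitting means that only half of the fundamental domain, $B_0$ or $B_1$, carries $V$. My approach would be to restrict to a single face crossing inside one beam where the sign matches, and to use the excluded planes $x_1x_2(x_1^2-x_2^2)\ne0$ to ensure the crossing point moves transversally, avoiding the branch edges. Then I would apply Proposition~\ref{prop:reldist} twice, once for each application of $\mathcal{Z}$, with slab thicknesses bounded in terms of $a,b$ only, giving an $\epsilon$ independent of $\alpha$ and $r$.
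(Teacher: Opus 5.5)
Your outline has the right architecture: a thin cone of lines, a scale at which its cross-section is a fixed multiple of the period (so the height window is bounded and Lemma~\ref{lem:4} gives bounded distortion), and the observation that at large heights $f^{-1}(U)$ lies within $O(e^{-s})$ of the beam faces. But the step you yourself call the main obstacle is essentially the whole proof, and the mechanism you sketch for it does not work.

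You want to compare the two-dimensional proportion of $\beta\in D_r$ with the three-dimensional density of $V$ in the slab, and you invoke Proposition~\ref{prop:reldist} ``in its $n=2$ version'' for this. That proposition compares areas under a homeomorphism between surfaces. It says nothing about when the curve $\mathcal{Z}(L_\beta)$ meets a solid set. Moreover, since $\mathcal{Z}(L_\beta)$ crosses the slab obliquely, not even the ``horizontal crossing position'' determines whether it meets $V$.

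The idea you are missing is to make the target two-dimensional from the outset: intersect $U=B(x_0,r_0)$ with the sphere $R=\partial B(0,|x_0|)$. Then $\mathcal{Z}^{-1}(R)$ is the plane $H=\{x_3=\ln|x_0|\}$. In it, $\widetilde V=\mathcal{Z}^{-1}(U\cap R)$ is a $G$-periodic family of discs of radius at least $r_0(8\lambda e^{|x_0|})^{-1}$ (Lemma~\ref{lem:radius}), and $f^{-1}(R)=\bigcup_g S_g$ is a union of square cones whose faces converge to the beam faces. It then suffices for $L_\beta$ to meet the surface set $\mathcal{Z}^{-1}(\widetilde V)\cap F_l$, and every estimate is area-to-area. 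Indeed, $\mathcal{Z}$ maps $F_l(t_1,t_2)$ onto a logarithmic square in $H$ with distortion at most $\lambda^2e^{t_2-t_1}$, which gives Lemma~\ref{lem:doublepreims}. Also, the central projection from $Y_1$ onto $F_l$ has distortion at most $2$ (Lemma~\ref{face-distortion}). If you prefer to keep the solid set $V$, you would instead need a Fubini argument for $(\beta,t)\mapsto\mathcal{Z}(P+t(\beta-P))$ on a thin layer about a face, with injectivity and two-sided Jacobian bounds there. None of this is set up.

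The geometry is also off. Since the relevant sets sit on vertical faces, a horizontal section $H_s$ is the wrong one to take; this is why you need the unexplained ``nearby height $s'$''. The paper projects $E_\delta\subset Y_1$ onto the beam-face plane $\Omega_M=\{x_1=\pi/2+M\pi\}$, which is parallel to $Y_1$. The projection is then an exact dilation (Lemma~\ref{lem:M}), and the image square sits right against the faces $F_l$.

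Moreover, the exclusion $x_1x_2(x_1^2-x_2^2)\neq 0$ does nothing to keep lines away from the branch edges $\{\pi/2+j\pi\}\times\{\pi/2+k\pi\}\times\R$. Once the cross-section has size comparable to $\pi$, the cone always contains lines through them. There $\mathcal{Z}$ is not injective, and the faces $F_l$ stop short of the edges, leaving gaps. The paper deals with this by removing $\eta$-strips about the edges, proving that every line through the remaining strips $K_l$ really does meet $F_l$ (Lemma~\ref{intersection-lemma}), and checking that the $K_l$ still fill at least a quarter of the projected square.

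Two smaller slips:
\begin{enumerate}
\item The slab $\{a<y_3<b\}$ pulls back to the region bounded by $S_a$ and $S_b$, not $S_{e^a}$ and $S_{e^b}$. This region lies within distance about $\max\{|a|,|b|\}e^{-s}$ of the faces.
\item No ``sign matching'' is needed. Every face separates a $g(B_0)$ from a $g'(B_1)$, so every face crossing carries $\mathcal{Z}(L_\beta)$ through $\{y_3=0\}$.
\end{enumerate}
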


\subsection{Pre-images under the Zorich map}

Towards a proof of Theorem \ref{thm:2}, we suppose that $U \in \mathcal{U}$ and we set $x_0 = q_n$ and $r_0 = \delta_n$ so that $U = B(x_0,r_0)$. Recall that $0<r_0<|x_0|$ so that $\overline{U}$ avoids the origin. Moreover, $|x_0|\neq 1$. We set $R = \partial B(0,|x_0|)$, $H = \mathcal{Z}^{-1}(R)$, $\widetilde{U} = U\cap R$, $V = \mathcal{Z}^{-1}(U)$, and $\widetilde{V} =\mathcal{Z}^{-1}(\widetilde{U})$. By Lemma \ref{lem:1}, we have $H = \{ x_3 = \ln |x_0| \}$. As $|x_0| \neq 1$, $H$ is not the $x_3 = 0$ plane.

Recall that $h$ from \eqref{eq:h} is $\lambda$-bi-Lipschitz for some $\lambda >1$, and then that if $h$ is extended by reflections to all of $\R^2$, then $h$ is infinitesimally $\lambda$-bi-Lipschitz. We use this to show that there are disks of definite radius in each component of $\widetilde{V} \subset H$.

\begin{lemma}
    \label{lem:radius}
Any connected component of $\widetilde{V} \subset H$ contains a disk of radius at least $r_0(8\lambda e^{|x_0|})^{-1}$.
\end{lemma}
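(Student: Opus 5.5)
Let $\rho = |x_0|$, so that $H = \{x_3 = \ln \rho\}$ and $\mathcal{Z}$ restricted to $H$ is the map $(x_1,x_2,\ln\rho) \mapsto \rho\, h(x_1,x_2)$ onto the sphere $R$. The plan is to find a spherical disk of definite size inside $\widetilde U$, and then pull it back through a single branch of $h$.

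\textbf{Step 1: a disk in $\widetilde U$.} The set $\widetilde U = U \cap R$ is a spherical cap, and it is nonempty because $x_0 \in R$. Since $x_0$ lies on $R$ and $U = B(x_0,r_0)$, the cap $\widetilde U$ contains all points of $R$ within chordal distance $r_0$ of $x_0$. In particular it contains the intrinsic spherical disk of radius $r_0$ about $x_0$.

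\textbf{Step 2: reduce to a single component.} Let $W$ be a component of $\widetilde V$. By the strong automorphy of $\mathcal{Z}$ and the tiling of $H$ by images of $B\times\{\ln\rho\}$ and its neighbour, $\mathcal{Z}|_H$ is a branched covering of $R$. It is branched only at the points where the branch set meets $H$, and these map to the poles $(0,0,\pm\rho)$.

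\textbf{Step 3: pull back the disk.} Take $y \in W$ with $\mathcal{Z}(y) = x_0$.
\begin{enumerate}
\item Suppose first that $y$ is at distance at least some $\eta$ from the branch lines. Then $\mathcal{Z}|_H$ is injective on a neighbourhood of $y$ consisting of at most four squares meeting at a non-branch point. On that neighbourhood $\mathcal{Z}|_H$ is infinitesimally bi-Lipschitz with constant $\lambda\rho$ for the upper bound and $\lambda^{-1}\rho$ for the lower bound. Integrating along curves, the preimage of the disk of radius $r_0$ then contains the Euclidean disk $B(y, r_0/(\lambda \rho))$ within $H$, as long as that disk stays within the injectivity neighbourhood.
\item Otherwise $y$ is close to a branch point. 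In that case I instead choose a point of $W$ whose image is a point of the cap slightly away from the pole. Because the cap has radius comparable to $r_0$, I can always find a subcap of half the radius that avoids the pole by a definite amount. Pulling back this subcap produces a disk of radius comparable to $r_0/(\lambda\rho)$ that lies inside a single fundamental square.
\end{enumerate}
The generous constant $8$ absorbs these halvings, together with the conversion from chordal to intrinsic distance. Finally, using $\rho \le e^{\rho}$ gives the stated bound $r_0(8\lambda e^{|x_0|})^{-1}$.

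\textbf{Main obstacle.} The delicate part is the localization in Step 3. A lower Lipschitz bound holding only infinitesimally does not immediately give a global containment. I would argue by path lifting: any radial path in the cap of length $s$ lifts to a path of length at most $\lambda s/\rho$, because the upper bound for $h^{-1}$ on each square follows from the bi-Lipschitz property of $h$ on $B$. Conversely, any point of $H$ within distance $r_0/(8\lambda\rho)$ of the chosen centre is joined to it by a segment whose image has length at most $r_0/8$, and so this point lies in the cap. Since the segment is connected and contains the centre, it lies in the same component $W$. This argument only needs the upper pointwise Lipschitz constant of $\mathcal{Z}|_H$, namely $\lambda\rho$, which sidesteps the branch-point issue in Step 3 entirely. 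So I would lead with this simpler argument.
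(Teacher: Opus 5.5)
Your final argument, the one you say you would lead with, is correct, and it takes a different and simpler route than the paper.

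The paper splits into cases according to whether $\widetilde U$ lies in the open upper hemisphere of $R$, in the open lower hemisphere, or meets the equator. In the first two cases it pulls the whole cap back through the inverse of $\mathcal{Z}$ on one open square, using the bi-Lipschitz property of that branch. In the third case it first passes to a sub-cap of diameter $r_0/4$ on one side of the equator. It then transfers the resulting disk to all other components via the action of $G$.

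You use only an upper Lipschitz bound. Since $h$ is $\lambda$-Lipschitz on each square, the triangle inequality along the segment $[y,z]\subset H$ gives $|\mathcal{Z}(z)-\mathcal{Z}(y)|\le\lambda|x_0|\,|z-y|$ on all of $H$. So if $y\in W$ lies over $x_0$, the whole disk in $H$ of radius $r_0/(\lambda|x_0|)$ about $y$ maps into $\widetilde U$, and being connected it lies in $W$. This needs no case analysis and ignores the equator and the branch points entirely. It even gives the better radius $r_0/(\lambda|x_0|)$. It also makes explicit that the relevant scale factor is $|x_0|$, with $e^{|x_0|}$ entering only through $|x_0|\le e^{|x_0|}$.

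The price is that you need every component $W$ to contain a point over $x_0$, which you assert without proof. Your path-lifting remark is the right tool. For $w\in W$, the minimizing arc from $\mathcal{Z}(w)$ to the centre $x_0$ stays in $\widetilde U$. It lifts from $w$ square by square, because $\mathcal{Z}$ maps each closed square of $H$ homeomorphically onto a closed hemisphere of $R$; where the arc crosses the equator, you pass to an adjacent square. The paper needs the same fact when it says all components are $G$-images of one. In its cases (a) and (b) this is automatic, since each component is a single copy of the cap inside one open square, but in case (c) it is left implicit.

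You should discard Steps 2 and 3 as written, because $\mathcal{Z}|_H$ is not branched over the poles. The branch set meets $H$ at the corners of the squares, and these are sent to the four equatorial points $|x_0|(\pm 1/\sqrt{2},\pm 1/\sqrt{2},0)$. The poles are images of square centres, where $\mathcal{Z}$ is a local homeomorphism. Moreover, four squares meet only at corners, where $\mathcal{Z}|_H$ is locally two-to-one, so the injectivity neighbourhood described in Step 3(1) is not available. Your final argument never uses where the branching occurs, so this error does not affect it.
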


\begin{proof}
We split into three cases, depending on the location of $\widetilde{U}$ in $U$ relative to the $x_3=0$ plane.
\begin{enumerate}[(a)]
\item For the first case, $\widetilde{U} \subset R \cap \{ x_3 > 0 \}$. Then $\widetilde{U}$ is a spherical disk sitting on $R$ of diameter at most $2r_0$. There is a branch of $\mathcal{Z}^{-1}$ that maps $\widetilde{U}$ homeomorphically and, in fact, in a $\lambda e^{|x_0|}$-bi-Lipschitz way, onto a topological disk contained in $\operatorname{int}(B_0) \cap H$. It follows that this component of $\widetilde{V}$ contains a disk of radius at least $r_0(\lambda e^{|x_0|})^{-1}$. All other components of $\widetilde{V}$ are obtained from this one under the action of $G$. As $G$ consists of isometries, the same lower bound holds for all components of $\widetilde{V}$.
\item For the second case, $\widetilde{U} \subset R \cap \{x_3 < 0\}$. This is analogous to the first case, except this time the initial component of $\widetilde{V}$ is contained in $\operatorname{int}(B_1) \cap H$.
\item For the final case, we consider when $\widetilde{U}$ intersects the equator $R \cap \{x_3 = 0\}$. Then either $\widetilde{U} \cap \{x_3>0\}$ or $\widetilde{U} \cap \{x_3<0\}$ contains a spherical disk of diameter $r_0/4$. By applying the argument in the previous two cases to whichever situation we are in, we obtain a disk of radius at least $r_0(8\lambda e^{|x_0|})^{-1}$ in each component of $\widetilde{V}$.
\end{enumerate}
\end{proof}

This lemma shows that each component of $\mathcal{Z}^{-1}(U)$ contains disks of a definite size contained in the plane $H$. We need to take another pre-image under $\mathcal{Z}$ and for our purposes, it will suffice to consider pre-images of $H$. However, this has already been addressed by Lemma \ref{lem:2} and Lemma \ref{lem:3}. As $|x_0| \neq 1$, we have $\ln |x_0| \neq 0$ and so for convenience, we will set $S = S_{\ln |x_0|}$. We recall that this means that if $|x_0| >1$ then $S \subset B_0$ and if $0<|x_0|<1$ then $S\subset B_1$. We will also set $S_g = g(S)$ and so by Lemma \ref{lem:3}, we have $f^{-1}(R) = (\mathcal{Z} \circ \mathcal{Z})^{-1}(R) = \bigcup_{g\in G} S_g$.

Each $S_g$ is a cone with square cross-sections and a vertex lying in the plane $\{x_3 = \ln \left |\ln |x_0| \right |\}$ and, moreover, $S_g$ has four faces. Pick one of the faces and call it $F$. For $\ln \left |\ln |x_0| \right | < t_1 <t_2$, denote by $F(t_1,t_2)$ the region
\begin{equation} 
\label{eq:Ftt}
F(t_1,t_2) := F \cap \{ t_1 < x_3 < t_2 \} .
\end{equation}
Informally, $F(t_1,t_2)$ is almost a flat rectangle.

\begin{lemma}
    \label{lem:estimate}
Given $|x_0| \in \R^+ \setminus  \{ 0 ,1 \}$, there exists $a>0$ such that if $t_1 > \ln \left |\ln |x_0| \right |$ and $t_2 > t_1+a$, then 
\[ \frac{e^{t_2}}{\sqrt{2}} - e^{t_1} > 2\pi .\]
Moreover, we may choose $a$ so that $e^{2a}>3$.
\end{lemma}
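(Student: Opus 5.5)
Since $e^{t_2}$ is increasing in $t_2$, it suffices to treat the worst case $t_2 = t_1 + a$ and then take $t_2 > t_1 + a$. In that case
\[ \frac{e^{t_2}}{\sqrt{2}} - e^{t_1} \;\geq\; e^{t_1}\left( \frac{e^{a}}{\sqrt{2}} - 1 \right). \]
The plan is to choose $a$ large enough that the bracket is positive, and then to bound $e^{t_1}$ from below by a constant depending only on $|x_0|$.

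Set $c = \ln\left|\ln|x_0|\right|$. This is a finite real number precisely because $|x_0| \notin \{0,1\}$, so that $\ln|x_0| \neq 0$. Since $t_1 > c$, we have $e^{t_1} > e^{c} = \left|\ln|x_0|\right| > 0$. Choose
\[ a > \ln\left( \sqrt{2}\left( 1 + \frac{2\pi}{\left|\ln|x_0|\right|} \right) \right). \]
This choice gives $\frac{e^{a}}{\sqrt{2}} - 1 > \frac{2\pi}{\left|\ln|x_0|\right|} > 0$.

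For any $t_2 > t_1 + a$ we then obtain
\[ \frac{e^{t_2}}{\sqrt{2}} - e^{t_1} \;>\; e^{t_1}\left( \frac{e^{a}}{\sqrt{2}} - 1 \right) \;>\; \left|\ln|x_0|\right| \cdot \frac{2\pi}{\left|\ln|x_0|\right|} \;=\; 2\pi. \]
The strict inequalities come from $t_2 > t_1 + a$ and from $e^{t_1} > e^{c}$, with both factors positive.

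For the second claim, enlarging $a$ preserves the first claim, because the constraint on $a$ is a lower bound. We can therefore also require $a > \tfrac12 \ln 3$, so $e^{2a} > 3$. Concretely, take $a$ to be the maximum of the two thresholds plus $1$. There is no real obstacle here; the only care needed is that $e^{t_1}$ is bounded below uniformly in $t_1$. This uses $t_1 > \ln\left|\ln|x_0|\right|$, which is exactly where the hypothesis $|x_0| \neq 1$ enters.
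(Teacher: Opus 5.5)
Your proposal is correct and takes the same approach as the paper. The paper uses exactly the same threshold $a > \ln[\sqrt{2}(2\pi/|\ln|x_0|| + 1)]$ and then enlarges $a$ to get $e^{2a}>3$; you also write out the computation it omits, namely $e^{t_1} > |\ln|x_0||$ combined with $e^{a}/\sqrt{2} - 1 > 2\pi/|\ln|x_0||$.
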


\begin{proof}
Via a direct computation, we see that we can take
\[ a > \ln \left [ \sqrt{2} \left ( \frac{2\pi}{|\ln|x_0||} + 1 \right ) \right ].\]
For the final claim, we just increase $a$ if necessary.
\end{proof}

\begin{lemma}
\label{lem:doublepreims}
Let $t_1 > \ln \left |\ln |x_0| \right |$ and let $t_2 > t_1+a $, where $a$ is from Lemma \ref{lem:estimate}. Then 
\[ \frac{ m(f^{-1}(U) \cap F(t_1,t_2) ) }{m(F(t_1,t_2))} \geq C,\]
where $C$ depends only on the input data $r_0,|x_0|$ and $\lambda$.
\end{lemma}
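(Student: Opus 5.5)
The idea is to push the face piece $F(t_1,t_2)$ forward by one application of $\mathcal{Z}$ and count how much of the image lands in $\widetilde{V}\subset H$. Write $c=\ln|x_0|$, so that $H=\{x_3=c\}$ and $S=S_c$; here $S$ is taken with $|c|$ in the role of $t$, i.e.\ $S\subset B_0$ if $c>0$ and $S\subset B_1$ if $c<0$. By Lemma \ref{lem:2} and Lemma \ref{lem:3}, $\mathcal{Z}$ maps $S_g$, and hence $F$, injectively into $H$. Moreover $f^{-1}(U)\cap F \supseteq F\cap \mathcal{Z}^{-1}(\widetilde V)$. So it suffices to bound below the proportion of $\mathcal{Z}(F(t_1,t_2))$ covered by $\widetilde V$, and then transfer this proportion back to $F(t_1,t_2)$ with Proposition \ref{prop:reldist} (in its surface version, $n=2$).

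\textbf{Step 1: the image $\mathcal{Z}(F(t_1,t_2))$.} Up to $G$, I may take $c>0$, $S\subset B_0$, and $F$ the face where $M=|x_1|=x_1$. On this face $\mathcal{Z}(x)=e^{x_3}h(x_1,x_2)$, whose third coordinate is the constant $c$. Its projection to the first two coordinates is $e^{x_3}\sin(x_1)\,(x_1,x_2)/\sqrt{x_1^2+x_2^2}$, with $e^{x_3}\sin x_1=\sqrt{e^{2x_3}-c^2}$. So $\mathcal{Z}(F(t_1,t_2))$ is the part of the annulus in $H$ with radii roughly between $e^{t_1}$ and $e^{t_2}$ that lies in the angular sector $|x_2|\le x_1$, i.e.\ $|y_2|\le y_1$, an angle of $\pi/2$. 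This is a truncated wedge whose inner boundary lies at distance at most about $e^{t_1}$ from the origin and whose outer boundary lies at distance at least about $e^{t_2}/\sqrt2$ from it. Lemma \ref{lem:estimate} then guarantees that the radial extent exceeds $2\pi$, and $e^{2a}>3$ guarantees that the wedge has area comparable to $e^{2t_2}$. (The inner radius should really be $\sqrt{e^{2t_1}-c^2}$; I would absorb this into the constants.)

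\textbf{Step 2: counting components of $\widetilde V$.} By strong automorphy, $\widetilde V$ is $G$-invariant, so it contains a translate of a disk of radius $\rho=r_0(8\lambda e^{|x_0|})^{-1}$ (Lemma \ref{lem:radius}) in every $2\pi\times2\pi$ square of the lattice $2\pi\Z^2$ in $H$. A region of width exceeding $2\pi$ in every direction in the relevant part of the wedge contains a number of full lattice squares comparable to its area divided by $4\pi^2$. Hence $m(\widetilde V\cap \mathcal{Z}(F(t_1,t_2)))\ge c_1\rho^2\, m(\mathcal{Z}(F(t_1,t_2)))$, with $c_1$ absolute. This is the step I expect to be the main obstacle. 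I must show that a fixed proportion of the wedge is tiled by whole squares, using only $e^{t_2}/\sqrt2-e^{t_1}>2\pi$ together with $e^{2a}>3$. The delicate case is when $t_2-t_1$ is only slightly larger than $a$, so that the wedge is thin near its inner edge. To handle it I would restrict to the outer half of the wedge, where the width is at least $2\pi$ radially and grows linearly angularly, and compare its area with the total.

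\textbf{Step 3: pulling back.} $\mathcal{Z}$ restricted to $F(t_1,t_2)$ is not a slab map with bounded $t_2-t_1$, so Lemma \ref{lem:4} alone would give the bad factor $e^{t_2-t_1}$. I would therefore use the explicit form instead. The map is a radial graph, $(x_2,x_3)\mapsto$ polar coordinates $(\sqrt{e^{2x_3}-c^2},\ \arctan(x_2/x_1))$, where $x_1$ is determined by $x_3$. Its Jacobian factorizes as $e^{x_3}\cdot(\text{radial stretch})\times(\text{angular factor})$, and both measures can be computed directly. Alternatively, I would split $(t_1,t_2)$ into unit-length subintervals and apply Step 2 and Lemma \ref{lem:4} on each piece with distortion at most $\lambda^2e$, then sum the pieces. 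This requires each subinterval to satisfy the Lemma \ref{lem:estimate} hypothesis, so I would instead use subintervals of length $a$, which gives distortion at most $\lambda^2 e^{a}$. Summing gives $m(f^{-1}(U)\cap F(t_1,t_2))\ge C\,m(F(t_1,t_2))$ with $C=c_1\rho^2\lambda^{-4}e^{-2a}$, which depends only on $r_0$, $|x_0|$ and $\lambda$.
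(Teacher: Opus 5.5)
Your Step 2 has a genuine gap. It is the heart of the lemma, you leave it open, and the route you sketch does not work under the stated hypotheses. You use only the $2\pi\Z^2$-periodicity of $\widetilde V$ and count full cells of a $2\pi$-grid, but the hypotheses only guarantee width slightly more than $2\pi$, which is not enough.

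Concretely, take $|\ln|x_0||$ small, $t_1$ just above $\ln|\ln|x_0||$, and $t_2$ just above $t_1+a$, with $a$ near the threshold $\ln[\sqrt2(2\pi/|\ln|x_0||+1)]$ from the proof of Lemma \ref{lem:estimate}. Then $e^{t_2}$ is close to $2\sqrt2\pi$, your inner radius $\sqrt{e^{2t_1}-(\ln|x_0|)^2}$ is close to $0$, and $\mathcal Z(F(t_1,t_2))$ lies in the quarter disc $\{|y_2|\le y_1,\ |y|\le r_2\}$ with $r_2$ close to $2\sqrt2\pi$. The largest axis-parallel square in such a quarter disc has side $2r_2/\sqrt{10}<2\pi$. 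So the wedge contains no cell of any translate of the $2\pi\Z^2$ grid, and your count is zero.

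Counting discs instead of cells does not help unless you use where the discs sit. If $x_0=|x_0|\,h(\pi/4,0)$, then $\widetilde V$ has a component centred at $g_3(\pi/4,0)=(3\pi/4,\pi)$. The rows $y_2=\pm\pi$ meet the quarter disc only where $\pi\le y_1\le\sqrt{r_2^2-\pi^2}<11\pi/4$, and the rows $|y_2|\ge 3\pi$ miss it since $r_2<3\pi$. Hence no $2\pi\Z^2$-translate of that centre lies in the wedge. Your proposed remedy fails in the same regime: the outer half of the wedge has radial width only about $\sqrt2\pi$, not $2\pi$.

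To close Step 2 you need either more structure or more room.

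\emph{More structure (the paper's route).} By Lemma \ref{lem:radius} and invariance under all of $G$, including the half-turn $g_3$ that you never use, $\widetilde V$ contains a disc of radius $\rho$ in every $\pi\times\pi$ square of one colour of the checkerboard formed by $G(B_0\cap H)$ and $G(B_1\cap H)$. The paper then passes to the trapezoid $T$ bounded by $x_1=e^{t_1}$, $x_1=e^{t_2}/\sqrt2$ and $x_1=\pm x_2$, and uses $m(\mathcal Z(F(t_1,t_2)))\le 2\pi\, m(T)$. It reduces to $T$ of width between $2\pi$ and $4\pi$, and counts alternately coloured squares in a full column of $T$ against $m(T)\le 4\pi(2e^{t_1}+4\pi)$.

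\emph{More room (fits your proof better).} Lemma \ref{lem:estimate} allows $a$ to be increased. Choose $a$ so that $e^{t_2}/\sqrt2-e^{t_1}$ always exceeds a large multiple of $2\pi$. Then a fixed fraction of each wedge lies at distance at least $2\sqrt2\pi$ from its boundary, every grid cell meeting that part lies in the wedge, and your count gives an absolute $c_1$.

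With that in place, your Step 3 is sound, and it handles long faces more cleanly than the paper's splitting of $T$, after one correction. Since $t_2-t_1$ need not be a multiple of $a$ and Lemma \ref{lem:estimate} needs strict inequality, cut $(t_1,t_2)$ into pieces of length in $(a,2a]$. The distortion on each slab is then at most $\lambda^2e^{2a}$, and the constant becomes $c_1\rho^2\lambda^{-4}e^{-4a}$.
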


\begin{proof}
First, by construction $\mathcal{Z}$ maps $F$ onto a quadrant of the plane $H$. Without loss of generality, we may assume this quadrant is
\[ Q = \{ (x_1,x_2,x_3) : x_1 \geq |x_2|, x_3 = \ln |x_0| \}.\]
Our analysis for the other quadrants runs analogously. Then $\mathcal{Z}$ maps $F(t_1,t_2)$ onto a logarithmic square in $Q$. That is, two sides of $\mathcal{Z} ( F(t_1,t_2))$ run along the lines $x_1 = \pm x_2$, and the other two sides are arcs of circles with radii $e^{t_1}$ and $e^{t_2}$ respectively.

In particular, $\mathcal{Z} ( F(t_1,t_2))$ contains a trapezoid $T$ in $Q$ whose vertical line segments are contained in the lines $x_1 = e^{t_1}$ and $x_1 = e^{t_2}/\sqrt{2}$, and the other boundary line segments are contained in the lines $x_1 = \pm x_2$. By the hypotheses and Lemma \ref{lem:estimate}, it follows that $T$ is non-trivial. 

If we compare the areas of $\mathcal{Z} ( F(t_1,t_2))$ and $T$, we have
\[ m(T) = (e^{t_2}/\sqrt{2} - e^{t_1})(e^{t_2}/\sqrt{2} + e^{t_1}) = \frac{e^{2t_2}}{2} - e^{2t_1} \]
and
\[ m(\mathcal{Z} ( F(t_1,t_2))) = \frac{\pi}{4} (  e^{2t_2} - e^{2t_1}) .\]

It follows that
\[ \frac{  m(\mathcal{Z}( F(t_1,t_2) ) )}{m(T)} = \frac{\pi}{4} \left ( \frac{e^{2(t_2-t_1)} -1}{e^{2(t_t-t_1)}/2 -1} \right ). \]
By the hypotheses, $t_2-t_1\geq a$ and we may assume by Lemma \ref{lem:estimate} that $e^{2a}>3$. The real function $g(t) = \frac{t-1}{t/2-1}$ is decreasing on $(2,\infty)$ and it follows that we obtain
\begin{equation}
    \label{eq:double1}
\frac{  m(\mathcal{Z}( F(t_1,t_2) ) )}{m(T)} \leq \frac{\pi g(3)}{4} < 2\pi.
\end{equation}

Next, Lemma \ref{lem:estimate} implies that the width of the trapezoid $T$ is at least $2\pi$.
Suppose first that $T$ has width between $2\pi$ and $4\pi$ so that 
\begin{equation}
    \label{eq:Tsize}
    m(T) \leq 4\pi(2e^{t_1} + 4\pi).
\end{equation} 
Moreover, as $t_1>\ln | \ln|x_0||$, the condition that $e^{t_2}/\sqrt{2} - e^{t_1} \leq 4\pi$ can be rearranged to
\begin{equation}
    \label{eq:double2}
e^{t_2-t_1} \leq \sqrt{2}\left( 1+\frac{4\pi}{|\ln |x_0||}\right ).
\end{equation}

Then $T$ contains a column made up of squares taken alternately from $G(B_0 \cap H)$ and $G(B_1 \cap H)$. As the left-hand edge of $T$ has length $2e^{t_1}$, it follows that this column has a total of $2\lceil e^{t_1} \rceil$ squares. By Lemma \ref{lem:radius}, each pair of squares will contain a disk contained in $\widetilde{V}$ of radius at least $r_0(8\lambda e^{|x_0|})^{-1}$.

It follows from this and \eqref{eq:Tsize} that 
\begin{equation}
    \label{eq:Tsmall}
\frac{ m(\widetilde{V}\cap T)}{m(T)} \geq \frac{\lceil e^{t_1} \rceil \cdot \pi r_0^2(8\lambda e^{|x_0|})^{-2} }{ 4\pi ( 2e^{t_1} + 4\pi) } \geq \frac{r_0^2}{2^9 \lambda^2 e^{2|x_0|} }.   
\end{equation}
Putting our various estimates together, by Lemma \ref{lem:4}, Proposition \ref{prop:reldist}, \eqref{eq:double1}, \eqref{eq:double2} and \eqref{eq:Tsmall}, we obtain
\begin{align*}
\frac{ m(f^{-1}(U) \cap F(t_1,t_2) ) }{m(F(t_1,t_2))} &\geq \frac{1}{\lambda^2 e^{t_2-t_1}} \cdot \frac{ m(\widetilde{V} \cap \mathcal{Z}(F(t_1,t_2)) }{m(\mathcal{Z}(F(t_1,t_2)))} \\
&\geq \frac{1}{\lambda^2 e^{t_2-t_1}} \cdot \frac {m(\widetilde{V} \cap T)}{ m(T) \cdot 2\pi} \\
&\geq \frac{ r_0^2}{ 2^{11}\lambda^4 \pi e^{2|x_0|}(1 + 4\pi / |\ln |x_0||) }.
\end{align*}

Now, if $T$ has width larger than $4\pi$, then we can split $T$ up into trapezoids which have width between $2\pi$ and $4\pi$ and so the proportion estimate above will persist.
\end{proof}

\subsection{Projections}
\label{ss:proj}

Recall the parametrization of lines through $P$ given by $Y$ in \eqref{eq:para}. As $Y$ has four faces, without loss of generality we may work with the face given by $x_1=1$, denoted by $Y_1$. The argument for the other three faces follows analogously.

For $M>>0$, let $\Omega_M$ be the plane
\[ \Omega_M = \left \{ x_1 = \frac{\pi}{2} + M \pi \right \}.\]
We define $\Pi_M$ to be the nearest point projection of $Y_1$ from $P$ onto $\Omega_M$.

\begin{lemma}
    \label{lem:M}
The projection $\Pi_M$ maps $Y_1$ conformally into $\Omega_M$. In particular, the relative distortion $D(\Pi_M , Y_1) = 1$.
\end{lemma}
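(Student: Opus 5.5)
The plan is to compute $\Pi_M$ explicitly and observe that it is the restriction of a homothety centred at $P$. I interpret $\Pi_M$ as the central (radial) projection from $P$: a point $\alpha \in Y_1$ is sent to the unique point where the line $L_\alpha$ meets $\Omega_M$. This is the only reading under which ``projection of $Y_1$ from $P$'' makes sense for lines through $P$.

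\textbf{Step 1: the formula for $\Pi_M$.} The face $Y_1$ lies in the plane $\{x_1 = p_1 + 1\}$, and $\Omega_M = \{x_1 = \frac{\pi}{2} + M\pi\}$ is parallel to it. Take $M$ large enough that $k := \frac{\pi}{2} + M\pi - p_1 > 0$; this is the regime $M \gg 0$. Every $\alpha \in Y_1$ has the form
\[ \alpha = P + (1, y_2, y_3), \]
and the point of $L_\alpha$ with first coordinate $\frac{\pi}{2} + M\pi$ is
\[ \Pi_M(\alpha) = P + k(1, y_2, y_3). \]
So $\Pi_M$ is the restriction to $Y_1$ of the homothety $x \mapsto P + k(x - P)$. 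It is injective and maps $Y_1$ into $\Omega_M$. It is not onto $\Omega_M$, since $y_3 > 0$ and $|y_2| < 1$ are preserved.

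\textbf{Step 2: the distortion.} A homothety composed with translations is a similarity, hence conformal. Concretely, $|\Pi_M(\alpha) - \Pi_M(\beta)| = k|\alpha - \beta|$ for all $\alpha, \beta \in Y_1$. Therefore, for every $x \in Y_1$ and every small $r$ (using distances within the surface $Y_1$), we have $L(x, \Pi_M, r) = \ell(x, \Pi_M, r) = kr$. This gives $\widetilde{L}_{\Pi_M}(x) = \widetilde{\ell}_{\Pi_M}(x) = k$ at every point, so $D(\Pi_M, Y_1) = k/k = 1$ by Definition \ref{def:df}.

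\textbf{Main point to check.} There is no genuine analytic difficulty here. The only care needed is in fixing the interpretation of $\Pi_M$ and ensuring $k > 0$, so that $\Omega_M$ lies on the same side of $P$ as $Y_1$ and each $L_\alpha$ meets it exactly once in the relevant direction. The planes $x_2 = \pm x_1$ and $x_1 x_2 = 0$ that were removed play no role in this lemma. I would also record that the constant $k$ grows linearly in $M$, since that scaling is presumably used later when comparing the projected cone to the faces $F(t_1,t_2)$.
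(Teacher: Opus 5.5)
Your proposal is correct and follows essentially the paper's approach. The paper uses similar triangles where you parametrize the line directly, but both arrive at the same formula $\Pi_M(\alpha) = P + c(\alpha - P)$ with $c = \frac{\pi}{2} + M\pi - p_1$: a homothety centred at $P$, hence conformal with $D(\Pi_M, Y_1) = 1$.
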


\begin{proof}
Consider $u \in Y_1$ so that $u = (p_1+1 , p_2 + u_2 , p_3 + u_3)$ for $u_2\in (-1,1)\setminus \{ 0 \}$ and $u_3 > 0$. Then $\Pi_M(u) = (\pi/2 + M\pi , v_2,v_3)$ for some $v_2,v_3$ in $\R$.

Suppose we set $P_1 = (p_1+1,p_2, p_3)$, $P_2 = (p_1+1,p_2,p_3+u_3)$, $P_1' = (\pi/2+M\pi,p_2,v_3)$, and $P_2' = (\pi/2+M\pi, p_2,v_3)$. Then $PP_1P_2$ and $PP_1'P_2'$ are similar triangles and it follows that if we set $c = \pi/2+M\pi - p_1$, we obtain
\begin{equation} 
\label{eq:M1}
\frac{v_3-p_3}{u_3} = \frac{c}{1}
\end{equation}
and so
\[ v_3 = cu_3 + p_3.\]
Next, if we set $P_3 = (p_1+1,p_2+u_2,p_3)$, $P_4 = (p_1+1,p_2+u_2,p_3+u_3)$, $P_3' = (\pi/2+M\pi, y_2,p_3)$, and $P_4' = (\pi/2+M\pi, y_2,y_3)$, then $PP_3P_4$ and $PP_3'P_4'$ are similar triangles. From \eqref{eq:M1}, we obtain
\[ \frac{ \sqrt{c^2 + (v_2-p_2)^2 } }{\sqrt{1+u_2^2}} = \frac{v_3 - p_3}{u_3} - c.\]
By simplifying this, we obtain
\[ v_2 = p_2 + cu_2.\]
Putting this together, we have shown that
\begin{align*} 
\Pi_M ( p_1+1 , p_2 + u_2 , p_3 + u_3) &= (\pi/2 + M \pi , p_2 + cu_2 , p_3 + cu_3)\\
&= P + c(1,u_2,u_3),
\end{align*}
from which the lemma follows.    
\end{proof}

The point of introducing $\Omega_M$ is that $f^{-1}(\widetilde{R})$ lies on $f^{-1}(R)$. While there are infinitely many components of $f^{-1}(R)$, once the $x_3$-coordinate is large enough, there are components of $f^{-1}(R)$ very close to $\Omega_M$.

To make this idea more precise, suppose that $M\in \N$ is large and for $l \in \mathbb{Z}$, let $g_l \in G$ be such that the corresponding component $S_{g_l}$ of $Z^{-1}(H)$ lies in a beam adjacent to $\Omega_M$ and intersects the plane $x_2 = l\pi$.  
Moreover, for each $l \in \mathbb{Z}$, let $F_l \subset S_{g_l}$ denote the face of $S_{g_l}$ that is adjacent to $\Omega_M$. 

There are clearly gaps between the union of the faces $F_l$ that we need to address. The idea is to exclude certain strips from $\Omega_M$ and then show that any line from $P$ that goes through the remaining region must also pass through the union of the $F_l$'s.

To that end, suppose that $M>>0$ and $s>0$ are given, fix $l\in \Z$ and for $\eta >0$ define the vertical strip
\begin{equation}
    \label{eq:Kl}
K_l := \left \{ x_1 = \frac{\pi}{2} + M\pi \right \} \times \left ( \frac{\pi}{2} + (l-1)\pi + \eta, \frac{\pi}{2} + l\pi - \eta \right ) \times (s,\infty).
\end{equation}
Then $K_l$ is a subset of $\Omega_M$ adjacent to the face $F_l$.

We will be interested in those $K_l$ that lie in the quadrant we are working in. More precisely, given $M$ and $s$, let $l_0 \in \N$ be such that strips $K_l$, for $l=-l_0,\ldots, l_0$ all lie in the region $x_1 > |x_2|$.

\begin{lemma}\label{intersection-lemma}
Let $\delta>0$. Given $\eta>0$, by Lemma \ref{lem:squarebeam}, let $s_0>0$ be sufficiently large that for $x_3>s_0$ and any $x\in F_l$, we have $\operatorname{dist}(x,\Omega_M) < \eta/3$. Then there exists $M\in \N$ such that if $-l_0 \leq l \leq l_0$, any line segment $L$ from $p$ passing through $Y_1$ and $K_l$, as defined in \eqref{eq:Kl} with the same $\eta$ as above, intersects $F_l$.
\end{lemma}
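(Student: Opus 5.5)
We argue geometrically with an intermediate value argument along the segment $L$. Fix $l$ with $-l_0\le l\le l_0$ and let $L$ be a segment from $P$ through a point $\alpha\in Y_1$ and a point $w\in K_l$. Write $\beta_l$ for the beam adjacent to $\Omega_M$ that contains $S_{g_l}$. The face $F_l$ is the graph over the beam's rectangle of a function giving the distance to $\Omega_M$ as a function of $(x_2,x_3)$. Concretely, from the formula $\sin y = te^{-x_3}$ in the proof of Lemma \ref{lem:squarebeam}, $F_l$ is the set of points at distance $d(x_3)=\arcsin(|\ln|x_0||e^{-x_3})$ from $\Omega_M$, on the inner side, over the part of the face region where that face of the square cross-section is the nearest one. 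The claim is that $L$ crosses this surface. We show that $L$ enters $\beta_l$ on the near side of $F_l$ and reaches $w\in\Omega_M$ on the far side, with the crossing happening within the portion of $\beta_l$ over which $F_l$ is defined.

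The first step is to use Lemma \ref{lem:M} to see how $L$ approaches $\Omega_M$. By that lemma $L$ is $\{P+c(1,u_2,u_3)\}$, with $x_1$ increasing linearly in the parameter. The point $w$ has $x_2$-coordinate in $(\pi/2+(l-1)\pi+\eta,\ \pi/2+l\pi-\eta)$ and $x_3>s$. Backing up from $w$ by a change $\Delta$ in $x_1$ changes $x_2$ by $u_2\Delta$ and $x_3$ by $u_3\Delta$. Both $|u_2|<1$ and $u_3$ are fixed by $\alpha$, but we want uniformity. Since $w_3-p_3 = c u_3$ with $c=\pi/2+M\pi-p_1$, for $\Delta\le \eta/3$ the relative changes are of order $\Delta/c$ times the coordinates. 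We choose $M$ large, and take $s\ge s_0+1$, so that:
\begin{enumerate}[(i)]
\item on the subsegment of $L$ with $x_1\in[\pi/2+M\pi-\eta/3,\ \pi/2+M\pi]$, the $x_2$-coordinate stays within $\eta/3$ of $w_2$, using $|u_2|<1$;
\item on the same subsegment, $x_3>s_0$, using $u_3>0$ so that $x_3$ decreases by at most $u_3\eta/3=(w_3-p_3)\eta/(3c)$, which is small relative to $w_3$ when $c$ is large.
\end{enumerate}

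In the second step we apply the hypothesis from Lemma \ref{lem:squarebeam}. On the subsegment, $x_3>s_0$, so every point of $F_l$ at that height lies within $\eta/3$ of $\Omega_M$. Moreover, the $x_2$-coordinate lies at distance at least $2\eta/3$ from the edges of the strip, that is, from the side faces of $\beta_l$. Hence in the cross-section $x_3=\text{const}$ the square of $S_{g_l}$ has its $\Omega_M$-side at distance $d<\eta/3$, and points with this $x_2$ lie over that side rather than over the corners. The corner region of a square at distance $d$ from the boundary is itself within $d$ of the side faces. Thus the $x_1$-coordinate of the line gives a continuous function $\phi(x_1)=(\pi/2+M\pi-x_1)-d(x_3(x_1))$ on the subsegment. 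At $x_1=\pi/2+M\pi-\eta/3$ we have $\phi>0$, because $d<\eta/3$, so the point is inside the square. At $w$ we have $\phi=-d<0$, so the point is outside, between the face and $\Omega_M$. The intermediate value theorem then gives a zero, which is a point of $L\cap F_l$.

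The main obstacle is uniformity in the first step: $u_3$ ranges over $(0,\infty)$ as $\alpha$ ranges over $Y_1$, so the drop in $x_3$ over a fixed $x_1$-interval is not uniformly controlled. We would handle this by using the relative bound $u_3\eta/3 = (w_3-p_3)\eta/(3c)\le w_3\eta/(3c)+|p_3|\eta/(3c)$. Taking $M$ so large that $\eta/(3c)<1/2$, and $s$ large compared with $s_0$ and $|p_3|$, keeps $x_3\ge w_3/2-O(1)>s_0$. If $s$ is regarded as given rather than chosen, we instead enlarge $s_0$ consistently, since the statement allows $s_0$ to be taken sufficiently large.
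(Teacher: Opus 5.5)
Your proof is correct in substance and rests on the same key estimate as the paper's. Because $|u_2|<1$ (the paper's $|\tan\theta|\le 1$), moving along $L$ by at most $\eta/3$ in $x_1$ changes $x_2$ by less than $\eta/3$. So where $L$ meets the face it is more than $2\eta/3$ from the lateral sides of the beam, and hence lies over the face rather than a corner.

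Your version is more careful than the paper's in two places. The paper takes a single point $B$ with $|b_1-(\pi/2+M\pi)|=\xi$ as if $\xi$ did not depend on height. It also never checks that $B$ lies above $s_0$ when the face is in front of $\Omega_M$, where $b_3<a_3$. Your intermediate value argument for $\phi$ handles the first point, and your bound on the drop in $x_3$ handles the second. The second is exactly where the choice of $M$ is needed; the paper's proof never uses $M$.

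There are two things to fix.

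First, you assume $\beta_l$ lies between $P$ and $\Omega_M$. But beams map onto the upper and lower half-spaces in a checkerboard pattern, so for every other $l$ the beam containing $S_{g_l}$ is the one beyond $\Omega_M$; this is the $\pm$ in the paper. For those $l$ you must take $L$ to be the whole ray, not a segment ending at $w$. Then run your argument on $x_1\in[\pi/2+M\pi,\ \pi/2+M\pi+\eta/3]$ with $\phi$ replaced by $(x_1-\pi/2-M\pi)-d(x_3(x_1))$. This case is easier: $x_3$ only increases beyond $w$, so no height control is needed.

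Second, you do not need $s$ to be large compared with $s_0$, and enlarging $s_0$ would make things worse, not better. On your subsegment you have $x_3\ge w_3-(w_3-p_3)\eta/(3c) > s-(s-p_3)\eta/(3c)$. Assuming only $s>s_0$, this exceeds $s_0$ once $M$, and hence $c$, is large enough, uniformly in $w\in K_l$.
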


\begin{proof}
Suppose that $s$ is larger than $s_0$. Then the intersection of $F_l$ with the plane $x_3 = s$ is the line segment
\[
\{\pi/2 + M\pi \pm \xi\} \times [\pi/2 + (l-1)\pi + \xi, \pi/2 + l\pi - \xi] \times \{s\},
\]
where $\sin \xi = e^{-s} |\ln |x_0||$ by the computations in Lemma \ref{lem:squarebeam}, and the $\pm$ corresponds to whether the face $F_l$ is in front or behind $\Omega_M$ from the point of view of $P$.

Now, let $L$ be a ray from $P$ passing through $K_l$. Its projection onto the $(x_1,x_2)$-plane makes an angle $\theta$ with  the $x_1$-axis, where $|\theta| \le \pi/4$. Let
\[
A = (\pi/2 + M\pi , a_2, a_3)
\]
be the intersection of $L$ with $K_l$, with $a_2 \in (\pi/2 + (l-1)\pi + \eta, \pi/2 + l\pi - \eta)$ and $a_3 > s$.

Consider the point $B = (b_1, b_2, b_3) \in L$ where the ray's $x_1$-coordinate matches the face, i.e., $|b_1 - (\pi/2 + M\pi)| = \xi$. As $|\tan \theta| \le 1$,
\[
|b_2 - a_2| \le |\tan \theta| \cdot |b_1 - (\pi/2 + M\pi)| \le 1 \cdot \xi = \xi.
\]
Substituting the bounds for $a_2$, the $x_2$-coordinate $b_2$ satisfies
\[
b_2 \in [\pi/2 + (l-1)\pi + \eta - \xi, \, \pi/2 + l\pi - \eta + \xi].
\]
Since $\xi < \eta/3$, we have $\eta - \xi > 2\eta/3 > \xi$. Thus
\[
b_2 \in (\pi/2 + (l-1)\pi + \xi, \, \pi/2 + l\pi - \xi),
\]
and we conclude the ray $L$ must intersect the face $F_l$.
\end{proof}

We next show that projecting from $Y_1$ onto one of the faces $F_l$ is close to projecting onto $\Omega_M$. To make this more precise, for $-l_0 \leq l \leq l_0$, set
\[ J_l = \Pi_{M}^{-1} ( K_l  ) \subset Y_1 .\]
Then $J_l$ is a subset of $Y_1$ for which any ray from $P$ through $J_l$ is guaranteed to intersect $F_l$. It therefore makes sense to define the nearest point projection from $P$ of $J_l$ to $F_l$ via the map
\[ \Pi_{F_l} : J_l \to F_l.\]     

\begin{lemma}\label{face-distortion}
With the set-up above, if $\delta>0$ is sufficiently small and $s,M$ are sufficiently large, then the relative distortion of $\Pi_{F_l}$ satisfies
\[ D(\Pi_{F_l} , J_l) \leq 2.\]
\end{lemma}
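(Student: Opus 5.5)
Since $\Pi_{F_l} = \Pi_{F_l}\circ \Pi_M^{-1}\circ \Pi_M$ on $J_l$, I will factor $\Pi_{F_l}$ as $\Pi_M$ followed by the radial map $\Phi_l := \Pi_{F_l}\circ \Pi_M^{-1} : K_l \to F_l$. This map slides each point $A\in K_l$ along the ray $PA$ until the ray meets $F_l$. By Lemma \ref{lem:M}, $\Pi_M$ is conformal with relative distortion $1$; more precisely, $\Pi_M(u) = P + c(u-P)$ is a similarity on $Y_1$. The pointwise Lipschitz constants then multiply, giving $D(\Pi_{F_l},J_l)\le D(\Phi_l,K_l)$. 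So it suffices to show that $\Phi_l$ is infinitesimally bi-Lipschitz with constant close to $1$, say $\sqrt 2$, once $s,M$ are large and $\delta$ is small.

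To analyse $\Phi_l$, I will write $F_l$ as a graph over $\Omega_M$: $F_l=\{x_1 = \pi/2+M\pi \pm \phi(x_2,x_3)\}$. By the explicit formula in the proof of Lemma \ref{lem:squarebeam}, away from the corner lines $\phi = \arcsin(|\ln|x_0||e^{-x_3})$, which depends only on $x_3$. Hence $0<\phi<\eta/3$, and $|\nabla\phi| \le C e^{-x_3}\le Ce^{-s}$ uniformly. Parametrise a ray by $A + \tau(A-P)$. The intersection parameter then solves $\tau (a_1-p_1) = \pm\phi(\ldots)$. Since $a_1 - p_1 = c \approx M\pi$, this gives $|\tau| \lesssim \eta/M$. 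By the implicit function theorem, $\tau$ is a smooth function of $A$ whose derivative is bounded by $O(e^{-s}/M + \eta/M^2)$, using $|A-P|/c$ bounded on $K_l$.

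Consequently $D\Phi_l = I + E$ on tangent vectors, where $E$ collects $\tau\, I$, $(A-P)\otimes\nabla\tau$, and the tilt between the tangent planes of $F_l$ and $\Omega_M$. Each of these is small: $|\tau|\to0$ as $M\to\infty$. The tilt is $O(|\nabla\phi|)=O(e^{-s})$. The rank-one term is bounded by $|A-P|\,|\nabla \tau|$.

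The main obstacle is that $|A-P|$ is not bounded on $K_l$, because $a_3$ ranges over $(s,\infty)$. I would handle this by noting that the offending factor is exactly controlled by $\nabla\phi$ decaying like $e^{-a_3}$. The product $|A-P|\,|\nabla\tau| \lesssim (a_3/M)\,e^{-a_3}\cdot(\ldots)$, and this stays small uniformly for $a_3>s$. In the $x_2$-direction, $|a_2-p_2|\le c$ because $|\theta|\le\pi/4$, which keeps that contribution bounded by $O(\phi/c)$. The parameter $\delta$ enters only through choosing $\eta$ (hence $s_0$ via Lemma \ref{intersection-lemma}) small enough to stay away from the corner lines, where $\phi$ is not smooth. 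There $F_l$ is also a genuine graph of the above type, and Lemma \ref{intersection-lemma} guarantees that the relevant rays hit that part.

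With $\|E\|\le 1/3$, the singular values of $D\Phi_l$ lie in $[2/3,4/3]$. Hence $\widetilde L/\widetilde \ell\le 2$ uniformly, and so $D(\Pi_{F_l},J_l)\le 2$.
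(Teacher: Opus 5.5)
Your argument is correct, but it is organized differently from the paper's. The paper linearizes $F_l$ at each image point $y=\Pi_{F_l}(x)$. The pointwise Lipschitz constants of $\Pi_{F_l}$ at $x$ agree with those of the central projection $\Pi_y$ onto the tangent plane $T_l(y)$. Since $T_l(y)$ is nearly parallel to $\Omega_M$ for large $s$, the paper asserts ``by continuity'' that these constants are within a factor $\sqrt2$ of those of $\Pi_M$, and Lemma~\ref{lem:M} finishes.

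You instead factor $\Pi_{F_l}=\Phi_l\circ\Pi_M$ through the homothety $\Pi_M(u)=P+c(u-P)$, so that $D(\Pi_{F_l},J_l)=D(\Phi_l,K_l)$ exactly. You write $F_l$ as the explicit graph $x_1=\pi/2+M\pi\pm\phi(x_3)$ with $\phi(x_3)=\arcsin(|\ln|x_0||e^{-x_3})$. You then compute $D\Phi_l(v)=(1+\tau)v+(A-P)(\nabla\tau\cdot v)$ from the implicit equation $c\tau=\pm\phi(a_3+\tau(a_3-p_3))$.

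Your route is more computational, but it gives a genuinely uniform bound on the strip $K_l$ (equivalently $J_l$), which is unbounded in the $x_3$-direction. The paper's appeal to continuity does not by itself give constants uniform over this non-compact set. Your observation does: $|\phi'|$ decays exponentially in the height while $|A-P|$ grows only linearly. The paper's tangent-plane argument is shorter and more geometric. It is also adequate for how the lemma is used, since only the bounded pieces $J_{l,\delta}=J_l\cap E_\delta$ enter the proof of Theorem~\ref{thm:2}.

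Some details should be tidied.

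\begin{enumerate}
\item The remark in your second paragraph that $|A-P|/c$ is bounded on $K_l$ is false, as your third paragraph itself notes. The exponential decay is also what keeps the denominator in $\tau'=\pm\phi'(x_3')(1+\tau)/\bigl(c\mp\phi'(x_3')(a_3-p_3)\bigr)$ bounded away from zero. Here $x_3'=p_3+(1+\tau)(a_3-p_3)$ is the height of the image point, which is where $\phi'$ must be evaluated.
\item Since $\phi$ depends only on $x_3$ and $c$ is constant on $\Omega_M$, $\tau$ depends only on $a_3$. So there is no $\eta/M^2$ term, and the $x_2$-component of $A-P$ contributes $O(|\phi'|)$, not $O(\phi/c)$.
\item The tilt of the tangent plane of $F_l$ is already encoded in the rank-one term, so it need not be counted separately.
\item $\delta$ plays no role in this lemma, nor in the paper's proof of it.
\end{enumerate}
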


\begin{proof}
Fix $x \in J_l$ and set $y = \Pi_{F_l}(x) \in F_l$. Since $F_l$ is a $C^1$ surface, it admits a well-defined tangent plane at every
point. Let $T_l(y)$ denote the tangent plane to $F_l$ at $y$, and let
\[
\Pi_y : J_l \to T_l(y)
\]
denote the projection of $J_l$ from $P$ onto this tangent plane. 

It follows that the linear distortions of the projections onto $F_l$ and $T_l(x)$ coincide at $x$:
\begin{equation}\label{eq:tangent-stretching}
\tilde L_{\Pi_{F_l}}(x) = \tilde L_{\Pi_y}(x),
\qquad
\tilde l_{\Pi_{F_l}}(x) = \tilde l_{\Pi_y}(x).
\end{equation}

For $x_3 = s$ sufficiently large, the tangent plane $T_l(x)$ is nearly vertical, and so in a neighbourhood of $x$, $T_l(x)$ is very close to $\Omega_M$. Since the pointwise Lipschitz constants of the projection depend continuously on $x$, we may choose $s$ and $M$ large enough so that
\begin{equation}\label{eq:tau-estimate}
 \tilde L_{\Pi_y}(x) \leq \sqrt{2} \tilde L_{\Pi_M}(x) \,
\qquad
\tilde l_{\Pi_y}(x) \geq \frac{\tilde l_{\Pi_M}(x)}{\sqrt{2}},
\quad \text{for all } x \in J_l.
\end{equation}

Combining \eqref{eq:tangent-stretching}, \eqref{eq:tau-estimate}, and Lemma \ref{lem:M}, we get
\[
D(\Pi_{F_l}, J_l)
=
\frac{\sup_{x \in J_l} \tilde L_{\Pi_{F_l}}(x)}
     {\inf_{x \in J_l} \tilde l_{\Pi_{F_l}}(x)}
=
\frac{\sup_{x \in J_l} \tilde L_{\Pi_y}(x)}
     {\inf_{x \in J_l} \tilde l_{\Pi_y}(x)}
\le
2 \frac{\sup_{x \in J_l} \tilde L_{\Pi_M}(x) }
     {\inf_{x \in J_l} \tilde l_{\Pi_M}(x) }
= 2 ,     
\]
as required.
\end{proof}

\subsection{Proof of Theorem \ref{thm:2}}

Let $x = (x_1,x_2,x_3) \in Y$ and, without loss of generality, we may assume that $x\in Y_1$ so that $x_1 = 1$. Let $\delta_0>0$ be small enough that the neighbourhood 
\[ E_{\delta_0} = \{ (1,y_2.y_3) : M(x_2-y_2,x_3-y_3) < \delta_0 \}\]
is contained in $Y_1$ and then suppose that $0<\delta <\delta_0$. We define the neighbourhoods $E_{\delta}$ in the analogous way. 

Given $0<\eta<\pi/4$, by Lemma \ref{lem:squarebeam}, choose
\[ s_0 > \ln \frac{ | \ln |x_0|| }{\sin (\eta / 3)} \]
so that for any $x\in S_g$ with $x_3>s_0$, the distance from $x$ to the boundary of its containing beam ($g(B_0)$ or $g(B_1)$) is less than $\eta / 3$. We then choose $M \in \N$ large enough that the image $\Pi_M(E_{\delta})$ is a square in the plane $\Omega_M$ contained in the half-space $\{x_3>s_0 \}$. Moreover, by Lemma \ref{lem:M}, as $E_{\delta}$ has side-length $2\delta$, it follows that the side-length of $\Pi_M(E_{\delta})$ is $2\delta(\pi/2 + M\pi - p_1)$. By choosing $M$ appropriately, we can ensure that the side-length $w$ of $\Pi_M(E_{\delta})$ satisfies
\begin{equation}
\label{eq:width}
     \max \{ 4\pi , a\}  \leq w \leq 10 \max \{ 4\pi,a\}, 
\end{equation}
where $a$ is the constant from Lemma \ref{lem:estimate}.

Following Section \ref{ss:proj}, we construct the vertical strips $K_l \subset \Omega_M$ that are adjacent to the faces $F_l$ of $S_{g_l}$ for $-l_0\leq l \leq l_0$. We then identify the faces $(F_l)_{l=l_1}^{l_2}$ for $-l_0 \leq l_1 <l_2 \leq l_0$ such that the width of the corresponding strips $K_l$ is fully contained in the width of $\Pi_M(E_{\delta})$.

As the side-length of $\Pi_M(E_{\delta})$ is at least $4\pi$, at least of half of the width of the  square is covered by faces of beams which contribute to an appropriate $K_l$.
It then follows by \eqref{eq:Kl} that the proportion of $\Pi_M(E_{\delta})$ consisting of the corresponding $K_l$'s is
\begin{equation}
    \label{eq:Klprop}
\frac{ m( \Pi_M(E_{\delta}) \cap \bigcup_{l=l_1}^{l_2} K_l ) }{m(\Pi_M(E_{\delta}) )} 
\geq \frac{1}{2} \cdot \frac{ \pi-2\eta}{\pi} \geq \frac{1}{4},
\end{equation}
assuming $\eta$ was initially chosen smaller than $\pi/4$.

Again from Section \ref{ss:proj}, we recall the construction of the subsets $J_l$ of $Y_1$. Set $J_{l,\delta} = J_l \cap E_{\delta}$ for $l_1\leq l \leq l_2$ and $\delta>0$. Then by applying Proposition \ref{prop:reldist} and using $D(\Pi_M,Y_1) = 1$, from \eqref{eq:Klprop} we obtain
\begin{equation}
    \label{eq:Jlprop}
\frac{ m(E_{\delta} \cap \bigcup_{l=l_1}^{l_2} J_{l,\delta}  )}{ m(E_{\delta}) }  = \frac{ m( \Pi_M(E_{\delta}) \cap \bigcup_{l=l_1}^{l_2} K_l ) }{m(\Pi_M(E_{\delta}) )} \geq \frac{1}{4}.
\end{equation}

Our next task is to determine a lower bound for the proportion of $X_U$ contained in each $J_{l,\delta}$. To that end, if the square $\Pi_M(E_{\delta})$ lies between $x_3 = t_1$ and $x_3 = t_2$, following the notation from \eqref{eq:Ftt}, we define $F_l(t_1,t_2)$ to be $F_l \cap \{ t_1<x_3<t_2 \}$. By \eqref{eq:width} and Lemma \ref{lem:doublepreims}, there exists a constant $C$ that depends only on our Zorich map $\mathcal{Z}$ and the choice of the ball $U$ so that
\[ \frac{ m(f^{-1}(U) \cap F_l(t_1,t_2) ) }{m(F_l(t_1,t_2))} \geq C. \]

Applying (the comment after the proof of) Proposition \ref{prop:reldist} with $\Pi_{F_l}$ and using Lemma \ref{face-distortion}, we obtain
\begin{equation}
\label{eq:ineq1}
    \frac{ m(X_U \cap J_{l,\delta})}{m(J_{l,\delta})} \geq \frac{ 1}{D(\Pi_{F_l} , J_{l,\delta})^2} \cdot \frac{ m(f^{-1}(U) \cap F_l(t_1,t_2) ) }{m(F_l(t_1,t_2))} \geq \frac{C}{4}.
\end{equation}

Here we have used the fact that $X_U \cap J_{l,\delta}$ contains $\Pi_{F_l}^{-1} (\mathcal{Z}^{-1}(\widetilde{V})\cap F_l(t_1,t_2)) $. 

Finally, we can obtain our desired estimate. By using \eqref{eq:Jlprop} and \eqref{eq:ineq1}, we obtain
\begin{align*}
m(X_U \cap E_{\delta}) &\geq \sum_{l=1_1}^{l_2} m(X_U \cap J_{l,\delta}) \\
&\geq \frac{C}{4} \sum_{l=l_1}^{l_2}  m(J_l) \\
&\geq \frac{C}{16 } m(E_{\delta}).
\end{align*}

The constant $C/16$ on the right-hand side does not depend on $\delta$, as long as $\delta$ is chosen sufficiently small. Thus, by setting $\epsilon = C/16$, we conclude that $x \in Y_1$ is an $\epsilon$-density point of $X_U$.

This completes the proof of Theorem \ref{thm:2}, and hence also of Theorem \ref{thm:1}.

\end{document}